\newcommand{\C}{\mathbb{C}}
\newcommand{\R}{\mathbb{R}}
\newcommand{\coord}[1]{\mathbf{#1}}
\newcommand{\Adj}{\mathbf{A}}
\DeclareMathOperator{\Vm}{\mathbf{V}}
\DeclareMathOperator{\Um}{\mathbf{U}}
\DeclareMathOperator{\Eig}{\mathbf{\Lambda}}
\DeclareMathOperator{\ZeroMatrix}{\mathbf{0}}
\DeclareMathOperator{\Id}{\mathbf{I}}
\DeclareMathOperator{\Bm}{\mathbf{B}}
\DeclareMathOperator{\Cm}{\mathbf{C}}
\DeclareMathOperator{\Dm}{\mathbf{D}}
\DeclareMathOperator{\Hm}{\mathbf{H}}
\DeclareMathOperator{\Pm}{\mathbf{P}}
\DeclareMathOperator{\Qm}{\mathbf{Q}}
\DeclareMathOperator{\Tm}{\mathbf{T}}
\DeclareMathOperator{\DSTI}{\mathbf{DST-1}}
\newcommand{\mypar}[1]{{\bf #1.}}
\title{Eigendecomposition of Block Tridiagonal Matrices\thanks{This work was supported in part by AFOSR grant FA95501210087.}}
\author{
Aliaksei Sandryhaila
  \thanks{Department of Electrical and Computer Engineering,
  Carnegie Mellon University, Pittsburgh, PA 15213
  (asandryh@andrew.cmu.edu). }
\and
Jos\'{e} M.~F. Moura
  \thanks{Department of Electrical and Computer Engineering,
  Carnegie Mellon University, Pittsburgh, PA 15213
  (moura@ece.cmu.edu).}
}
\begin{document}

\maketitle


\begin{abstract}
Block tridiagonal matrices arise in applied mathematics, physics, and signal processing.
Many applications require knowledge of eigenvalues and eigenvectors of block tridiagonal matrices, which can be prohibitively expensive for large matrix sizes.
In this paper, we address the problem of the eigendecomposition of block tridiagonal matrices by studying a connection
between their eigenvalues and zeros of appropriate matrix polynomials. We use this connection with matrix polynomials
to derive a closed-form expression for the eigenvectors of block tridiagonal matrices, which eliminates the need for their direct calculation and can lead to a faster calculation of eigenvalues.
We also demonstrate with an example that our work can lead to fast algorithms for the eigenvector expansion for block tridiagonal matrices.
\end{abstract}

\begin{keywords}
Block tridiagonal matrix, eigendecomposition, eigenvalue, eigenvector, matrix polynomials, polynomial recursion, fast algorithm, eigenvector expansion, spectral analysis, graphs.
\end{keywords}

\begin{AMS}
Primary: 15A18, 65F15, 15B99.
Secondary: 65T50.
\end{AMS}

\pagestyle{myheadings}
\thispagestyle{plain}
\markboth{A.~SANDRYHAILA AND J.~M.~F.~MOURA}{EIGENDECOMPOSITION OF BLOCK TRIDIAGONAL MATRICES}


\section{Introduction}

We consider the problem of calculating exact eigenvalues and eigenvectors of an arbitrary $N\times N$ block tridiagonal matrix
\begin{equation}
\label{eq:block_tridiagonal_form}
\Adj = \begin{pmatrix}
\Bm_0 & \Dm_0 \\
\Cm_0 & \Bm_1 & \ddots \\
& \ddots & \ddots & \Dm_{L-2} \\
&& \Cm_{L-2} & \Bm_{L-1}
\end{pmatrix}.
\end{equation}
Here, the blocks $\Bm_n, \Cm_n, \Dm_n \in \C^{K\times K}$ are arbitrary complex $K\times K$ matrices. The matrix size $N$ then is divisible by the block size $K$, and we define $L=N/K$.
The only requirement we place on the matrix $\Adj$ in~\eqref{eq:block_tridiagonal_form} is that the blocks $\Dm_n$ are non-singular matrices, i.e., for all $n\geq 0$,
\begin{equation}
\label{eq:nonsingular_D}
\det\Dm_n \neq 0.
\end{equation}

Block tridiagonal matrices find applications in multiple areas. They arise in the analysis of random walks and birth-and-death processes~\cite{Dette:06,Grunbaum:09},
discretized transport problem simulations and electronic structure calculations~\cite{Kramer:93, Casati:94, Petersen:08},
scattering theory~\cite{Iida:90}, computational fluid dynamics~\cite{Anderson:95}, telecommunications~\cite{Lu:97}, signal processing~\cite{Moura:92, Balram:93, Asif:99, Kavcic:00, Asif:05}, data learning~\cite{Sandryhaila:13,Sandryhaila:13d}, and high performance computing~\cite{Polizzi:06}, among many others.

Many applications of block tridiagonal matrices require the calculation of their eigenvalues and eigenvectors. This problem has been extensively studied in the literature, although primarily for symmetric block tridiagonal matrices. A widely used direct approach is based on the iterative conjugation of the block tridiagonal matrix with sparse matrices until it is reduced to a tridiagonal form~\cite{Bischof:00}, followed by the solution of the eigendecomposition problem for the resulting tridiagonal matrix~\cite{Cuppen:81}. In general, this algorithm is expensive and numerically unstable, but more efficient and stable implementations have also been proposed~\cite{Gu:94, Gu:95}. Another approach to the direct calculation of eigenvectors and eigenvalues of tridiagonal matrices uses the relation between tridiagonal matrices and orthogonal polynomials~\cite{Askey:87,Gautschi:04,Sandryhaila:12}.

As an alternative to the direct approach, approximate eigendecomposition of special types of block tridiagonal matrices has been studied in~\cite{Gansterer:02,Gansterer:03}.
By approximating eigenvalues and eigenvectors, rather than computing them precisely, these methods can offer higher computational efficiency.

In this paper, we study the eigendecomposition of block tridiagonal matrices~\eqref{eq:block_tridiagonal_form} that satisfy condition~\eqref{eq:nonsingular_D}. We extend the connection between symmetric block tridiagonal matrices and orthogonal matrix polynomials proposed in~\cite{Duran:96,Dette:06,Grunbaum:09} to general, non-symmetric matrices of the form~\eqref{eq:block_tridiagonal_form}. Using this relation, we demonstrate that the eigenvalues of block tridiagonal matrices are the zeros of the determinants of appropriately constructed matrix polynomials. We construct a closed-form expression for the eigenvectors of block tridiagonal matrices that is simpler than the direct calculation of eigenvectors of the $N\times N$ matrix $\Adj$ in~\eqref{eq:block_tridiagonal_form} and instead involves only the calculation of null-space bases for $K\times K$ matrices. Since in most applications $K\ll N$, the proposed expression significantly reduces the cost of eigenvector computation for block tridiagonal matrices.

As an example, we study the spectral analysis of ``spider'' graphs, i.e., the eigendecomposition of their adjacency matrices, which possess the block tridiagonal structure~\eqref{eq:block_tridiagonal_form}. We determine the corresponding eigenvalues and eigenvectors and demonstrate that the closed-form expression for the eigenvectors leads to a fast computational algorithm for the vector expansion in the eigenvector basis of ``spider'' graphs.

\section{Matrix polynomials generated by block tridiagonal matrices}

Consider the blocks $\Bm_n$, $\Cm_n$, and $\Dm_n$ of the block tridiagonal matrix~\eqref{eq:block_tridiagonal_form}. Define the family of $K\times K$ matrix polynomials $\Pm_n(x)$
that satisfy the relation
\begin{equation}
\label{eq:three_term_relation}
x\cdot \Pm_n(x) = \Cm_{n-1} \Pm_{n-1}(x) + \Bm_n \Pm_n(x) + \Dm_n \Pm_{n+1}(x)
\end{equation}
with initial conditions $\Pm_{-1}(x) = \ZeroMatrix_K$ and $\Pm_0(x)=\Id_K$, respectively, the $K\times K$ zero matrix and the $K\times K$ identity matrix.

Rewrite the relation~\eqref{eq:three_term_relation} as the recurrence
\begin{equation}
\label{eq:recurrence}
\Pm_{n+1}(x) = \Dm_n^{-1} \left( x\cdot \Pm_n(x) - \Bm_n \Pm_n(x) - \Cm_{n-1} \Pm_{n-1}(x) \right).
\end{equation}
The non-singularity condition~\eqref{eq:nonsingular_D} ensures that~\eqref{eq:recurrence} and~\eqref{eq:three_term_relation} are well-defined for any block tridiagonal matrix~\eqref{eq:block_tridiagonal_form}. Since in this paper we study polynomials $\Pm_0(x)$, $\ldots$, $\Pm_L(x)$, we assume that the block $\Dm_{L-1}$ is defined and satisfies the condition~\eqref{eq:nonsingular_D}; for simplicity, we can assume that $\Dm_{L-1}=\Id_K$.\footnote{This assumption does not compromise the generality of our results, since in this paper we are only interested in the roots of the determinant of the matrix polynomial $\Pm_L(x)$. As follows from~\eqref{eq:recurrence} and the non-singularity condition~\eqref{eq:nonsingular_D}, the roots of $\det\Pm_L(x)$ are not affected by the value of $\det \Dm_{L-1}$.}

The matrix polynomials $\Pm_n(x)$ possess a number of useful properties.
As follows from~\eqref{eq:recurrence}, each element of $\Pm_n(x)$ is a complex-coefficient polynomial of degree $n$ in the variable $x$. Since $\Pm_n(x)$ is a $K\times K$ matrix, its determinant is a polynomial of degree $Kn$:
\begin{equation}
\label{eq:degree_determinant}
\deg \det \Pm_n(x) = Kn.
\end{equation}
We use these properties to derive the expressions for the eigenvalues and eigenvectors of block tridiagonal matrices~\eqref{eq:block_tridiagonal_form}.

\mypar{Remark}
Matrix polynomials generated by the relation~\eqref{eq:recurrence} can also possess an \emph{orthogonality} property. Although we do not use this property in our work, the orthogonality of matrix polynomials has been extensively studied before and we briefly overview it here.

If the block tridiagonal matrix $\Adj$ in~\eqref{eq:block_tridiagonal_form} is Hermitian, i.e., $\Adj=\Adj^H$, which means that $\Bm_n=\Bm_n^H$ and $\Cm_n^H = \Dm_n$, then there exists a
real interval $\mathcal{I}\subset\R$ and a $K\times K$ matrix weight function $\coord{W}(x)$, such that the polynomials $\Pm_n(x)$ are orthogonal over $\mathcal{I}$
with respect to $\coord{W}(x)$~\cite{Krein:49,Duran:96,Duran:05b,Grunbaum:08}:
\begin{equation}
\label{eq:orthogonality_OMP}
\int_{\mathcal{I}}\Pm_n(x)\coord{W}(x)\Pm_m^T(x)dx = \delta_{n-m}\Id_K.
\end{equation}
The orthogonality property~\eqref{eq:orthogonality_OMP} also holds for some non-Hermitian block tridiagonal matrices $\Adj$ that satisfy special conditions~\cite{Dette:06}.


\section{Eigenstructure of block tridiagonal matrices}
\label{sec:Eigendecomposition}

In this section, we demonstrate that the eigenvalues of the block tridiagonal matrix $\Adj$ in~\eqref{eq:block_tridiagonal_form} are closely related to the zeros of the matrix polynomials generated by the recurrence~\eqref{eq:recurrence} and derive a closed-form expression for the eigenvectors of $\Adj$.

\subsection{Eigenvalues and eigenvectors}

Following the notation in~\cite{Duran:96}, we call the roots of the determinant $\det \Pm_n(x)$ of a matrix polynomial $\Pm_n(x)$ the \emph{zeros} of $\Pm_n(x)$; i.e., $\lambda$ is a zero of $\Pm_n(x)$ if  $\det \Pm_n(\lambda)=0$. The following theorem shows that the eigenvalues of the block tridiagonal matrix $\Adj$ in~\eqref{eq:block_tridiagonal_form} coincide with the zeros of the matrix polynomial $\Pm_{L}(x)$ generated by the recurrence~\eqref{eq:recurrence}. This theorem also establishes a general form of the corresponding eigenvectors. The proof of the theorem follows closely the proof of Lemma~2.1 in~\cite{Duran:96} that considers symmetric block tridiagonal matrices and orthogonal polynomials.

\begin{theorem}
\label{thm:eigenvalue_is_root}
Consider an arbitrary block tridiagonal  matrix $\Adj$ of the form~\eqref{eq:block_tridiagonal_form} satisfying~\eqref{eq:nonsingular_D} and the corresponding matrix polynomials $\Pm_n(x)$ generated by the recurrence~\eqref{eq:recurrence}. Then $\coord{v}$ is an eigenvector of $\Adj$ that corresponds to an eigenvalue $\lambda$ if and only if (iff) $\lambda$ is a zero of the matrix polynomial $\Pm_L(x)$ and the vector $v$ has the form
\begin{equation}
\label{eq:eigenvector_via_P}
\coord{v} = \begin{pmatrix} \Pm_0(\lambda) \\ \vdots \\ \Pm_{L-2}(\lambda) \\ \Pm_{L-1}(\lambda) \end{pmatrix}\coord{u},
\end{equation}
where $\coord{u}\in\C^K$ is a vector from the null-space of the scalar matrix $\Pm_{L}(\lambda)$, i.e., the vector $u$ satisfies $\Pm_{L}(\lambda)\coord{u}=0 $ .
\end{theorem}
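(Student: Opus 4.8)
The plan is to rewrite the eigenvalue equation $\Adj\coord{v}=\lambda\coord{v}$ in terms of the $K$-dimensional subblocks of $\coord{v}$ and match the resulting block equations against the three-term relation~\eqref{eq:three_term_relation}. Partition $\coord{v}=(\coord{v}_0,\dots,\coord{v}_{L-1})$ with each $\coord{v}_n\in\C^K$. Reading off the block rows of $\Adj$ gives the initial equation $\Bm_0\coord{v}_0+\Dm_0\coord{v}_1=\lambda\coord{v}_0$, the interior equations $\Cm_{n-1}\coord{v}_{n-1}+\Bm_n\coord{v}_n+\Dm_n\coord{v}_{n+1}=\lambda\coord{v}_n$ for $1\le n\le L-2$, and the terminal equation $\Cm_{L-2}\coord{v}_{L-2}+\Bm_{L-1}\coord{v}_{L-1}=\lambda\coord{v}_{L-1}$. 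The key observation is that these are precisely~\eqref{eq:three_term_relation} evaluated at $x=\lambda$ and applied to a vector, once one reads $\coord{v}_n$ as $\Pm_n(\lambda)\coord{u}$ --- with the single caveat that the terminal equation is missing the $\Dm_{L-1}\Pm_L(\lambda)$ contribution.

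For the sufficiency (``if'') direction I would substitute the candidate $\coord{v}_n=\Pm_n(\lambda)\coord{u}$ into each block equation. The initial equation holds because $\Pm_{-1}(\lambda)=\ZeroMatrix_K$ and $\Pm_0(\lambda)=\Id_K$ reduce~\eqref{eq:three_term_relation} at $n=0$ to $\Bm_0+\Dm_0\Pm_1(\lambda)=\lambda\Id_K$; the interior equations hold verbatim by~\eqref{eq:three_term_relation}. The terminal equation, after substitution and use of~\eqref{eq:three_term_relation} at $n=L-1$, reduces to $-\Dm_{L-1}\Pm_L(\lambda)\coord{u}=0$, which holds exactly because $\coord{u}$ lies in the null-space of $\Pm_L(\lambda)$. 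Non-triviality of $\coord{v}$ is automatic, since its top block equals $\Pm_0(\lambda)\coord{u}=\coord{u}\neq 0$.

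For the necessity (``only if'') direction I would set $\coord{u}=\coord{v}_0$ and prove $\coord{v}_n=\Pm_n(\lambda)\coord{u}$ by induction on $n$, solving each successive block row for $\coord{v}_{n+1}$ and invoking the recurrence~\eqref{eq:recurrence}; this is well-defined because condition~\eqref{eq:nonsingular_D} makes each $\Dm_n$ invertible. Substituting this form into the terminal equation and again using~\eqref{eq:three_term_relation} at $n=L-1$ yields $\Dm_{L-1}\Pm_L(\lambda)\coord{u}=0$, whence $\Pm_L(\lambda)\coord{u}=0$. Finally, $\coord{v}\neq 0$ forces $\coord{u}=\coord{v}_0\neq 0$ (otherwise the recurrence propagates $\coord{v}_0=0$ to every block), so $\Pm_L(\lambda)$ has a nontrivial kernel and $\det\Pm_L(\lambda)=0$, i.e.\ $\lambda$ is a zero of $\Pm_L$.

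The main obstacle is the terminal row: unlike the interior rows it carries no $\Dm_{L-1}\Pm_L$ term from $\Adj$ itself, so the full eigenvalue condition is concentrated there. The argument hinges on recognizing that this missing term is exactly what~\eqref{eq:three_term_relation} supplies, and on using the non-singularity of $\Dm_{L-1}$ (conveniently $\Dm_{L-1}=\Id_K$) to strip it off and land on the clean null-space condition $\Pm_L(\lambda)\coord{u}=0$.
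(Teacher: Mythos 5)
Your proof is correct and takes essentially the same route as the paper's: partition $\coord{v}$ into $K$-blocks, match the block rows of $\Adj\coord{v}=\lambda\coord{v}$ against the three-term relation~\eqref{eq:three_term_relation}, propagate $\coord{v}_n=\Pm_n(\lambda)\coord{v}_0$ via the recurrence~\eqref{eq:recurrence} using~\eqref{eq:nonsingular_D}, and extract the null-space condition $\Pm_L(\lambda)\coord{u}=0$ from the terminal block row. The only difference is presentational: you argue the two directions of the iff separately and make explicit the points the paper leaves implicit (that the terminal row supplies the missing $\Dm_{L-1}\Pm_L(\lambda)$ term, and that $\coord{v}_0=0$ would propagate to $\coord{v}=0$), whereas the paper compresses everything into a single chain of equivalences.
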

\begin{proof}
Write the eigenvector $\coord{v}$ in the block form
$$
\coord{v} = \begin{pmatrix}
\coord{v}_0 \\ \vdots \\ \coord{v}_{L-1}
\end{pmatrix},
$$
where each block $\coord{v}_n\in\C^K$ is a vector of length $K$. As follows from~\eqref{eq:block_tridiagonal_form}, the relation $\Adj\coord{v}=\lambda\coord{v}$ is equivalent to
the set of equations
\begin{eqnarray}
\label{eq:equation1}
\Bm_0\coord{v}_0 + \Dm_0\coord{v}_1 &=& \lambda \coord{v}_0, \\
\nonumber
\Cm_0\coord{v}_0 + \Bm_1\coord{v}_1 + \Dm_1\coord{v}_2 &=& \lambda \coord{v}_1, \\
\nonumber
& \vdots \\
\label{eq:equation2}
\Cm_{L-2}\coord{v}_{L-2} + \Bm_{L-1}\coord{v}_{L-1} &=& \lambda \coord{v}_{L-1}.
\end{eqnarray}

Since $\Pm_0(x)=\Id_K$, we can write $\coord{v}_0 = \Pm_0(\lambda) \coord{v}_0$.
According to the recurrence~\eqref{eq:recurrence} and the non-singularity condition~\eqref{eq:nonsingular_D}, the equation~\eqref{eq:equation1} is equivalent to
$$
\coord{v}_1 = \Dm_0^{-1}(\lambda\Id_K - \Bm_0)\Pm_0(\lambda)\coord{v}_0 = \Pm_1(\lambda)\coord{v}_0.
$$
Continuing this substitution recursively, we obtain
\begin{equation}
\label{eq:vn_is_Pn}
\coord{v}_n = \Pm_n(\lambda)\coord{v}_0
\end{equation}
for $0\leq n < L$.

Substituting $n=L-1$ in~\eqref{eq:vn_is_Pn} and using equation~\eqref{eq:equation2} and the three-term relation~\eqref{eq:three_term_relation}, we obtain
$$
\Pm_{L}(\lambda) \coord{v}_0 = 0,
$$
which is equivalent to $\coord{v}_0$ belonging to the null-space of the matrix $\Pm_{L}(\lambda)$.

Finally, since $\coord{v}_0$ is not a zero vector, the null-space of $\Pm_{L}(\lambda)$ is non-trivial. This is possible iff $\det\Pm_{L}(\lambda)=0$,
or equivalently, iff $\lambda$ is a root of $\det\Pm_{L}(x)$.

By substituting $\coord{u}$ for $\coord{v}_0$, we obtain the relation~\eqref{eq:eigenvector_via_P}.
\end{proof}

As noted above, Theorem~\ref{thm:eigenvalue_is_root} does not require the matrix~\eqref{eq:block_tridiagonal_form} to be symmetric or the matrix polynomials $\Pm_n(x)$ to be orthogonal, and extends the result obtained in~\cite{Duran:96}. The theorem shows that the three-term recurrence relation~\eqref{eq:recurrence} is sufficient for establishing a bijective relation from the eigenvalues and eigenvectors of an arbitrary block tridiagonal matrix to the zeros of the matrix polynomials and their corresponding null-spaces.

\subsection{Characteristic polynomial}

Since eigenvalues of the matrix $\Adj$ are the roots of its characteristic polynomial $p_\Adj(x)$, Theorem~\ref{thm:eigenvalue_is_root} establishes that the characteristic polynomial of any block tridiagonal matrix $\Adj$ of the form~\eqref{eq:block_tridiagonal_form} and the polynomial $\det\Pm_{L}(x)$ share the same roots. Here, we establish a yet stronger result: not only do these polynomials have the same roots, but the roots have the same multiplicities.

Recall that the characteristic polynomial of a matrix $\Adj$ that has $M$ distinct eigenvalues $\lambda_0,\ldots,\lambda_{M-1}$ with multiplicities $a_0,\ldots,a_{M-1}$ is defined as
\begin{equation}
\label{eq:characteristic_polynomial}
p_\Adj(x) = (x-\lambda_0)^{a_0}(x-\lambda_1)^{a_1}\ldots (x-\lambda_{M-1})^{a_{M-1}} = \prod_{m=0}^{M-1}(x-\lambda_m)^{a_m}.
\end{equation}
Since $a_0 + \ldots + a_{M-1}=N$, the degree of the characteristic polynomial is
$$
\deg p_\Adj(x) = N.
$$

Recall that a square matrix is \emph{diagonalizable} if it has a complete set of $N$ linearly independent eigenvectors, i.e., an eigenvalue $\lambda_m$ with multiplicity $a_m$ has exactly $a_m$ linearly independent eigenvectors~\cite{Lancaster:85,Gantmacher:59}. In this case, the $N\times N$ diagonalizable matrix $\Adj$ can be written as
\begin{equation}
\label{eq:eigendecomposition}
\Adj=\Vm \Eig \Vm^{-1}.
\end{equation}
Here, $\Vm\in\C^{N\times N}$ is the eigenvector matrix with columns corresponding to the eigenvectors of $\Adj$. The columns are arranged so that the first $a_0$ columns are the linearly independent eigenvectors corresponding to the eigenvalues $\lambda_0$, the next $a_1$ columns are eigenvectors corresponding to $\lambda_1$, etc. The matrix $\Eig\in\C^{N\times N}$ is the diagonal matrix of eigenvalues, arranged in the same order as the columns of $\Vm$. We write the eigenvector matrix as a block diagonal matrix
\begin{equation}
\label{eq:eigenvalue_matrix}
\Eig = \begin{pmatrix}
\lambda_0 \Id_{a_0} \\
& \lambda_1 \Id_{a_1} \\
&& \ddots \\
&&& \lambda_{M-1} \Id_{a_{M-1}}
\end{pmatrix}
\end{equation}

The following theorem relates the characteristic polynomials of diagonalizable matrices~\eqref{eq:block_tridiagonal_form} and determinants of corresponding matrix polynomials.
\begin{theorem}
\label{thm:char_is_det_diagonalizable}
If a block tridiagonal matrix $\Adj$ of the form~\eqref{eq:block_tridiagonal_form} is diagonalizable, its characteristic polynomial is equal, up to a normalization constant $c$, to the determinant of the corresponding matrix polynomial $\Pm_L(x)$:
\begin{equation}
\label{char_is_det}
p_\Adj(x) = \frac{1}{c}\det \Pm_{L}(x).
\end{equation}
\end{theorem}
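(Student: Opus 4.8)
The plan is to show that the two degree-$N$ polynomials $p_\Adj(x)$ and $\det\Pm_L(x)$ share the same roots \emph{with the same multiplicities}; two polynomials of equal degree with identical root multiplicities coincide up to a scalar factor, which is exactly~\eqref{char_is_det}. First I would record the two degree facts: $\deg p_\Adj = N$, and by~\eqref{eq:degree_determinant} with $n=L$ we have $\deg\det\Pm_L = KL = N$. By Theorem~\ref{thm:eigenvalue_is_root} the two polynomials already have the same root set $\{\lambda_0,\ldots,\lambda_{M-1}\}$, so only the multiplicities remain to be matched.

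Next I would express the algebraic multiplicity $a_m$ as a null-space dimension. Because $\Adj$ is diagonalizable, $a_m$ equals the geometric multiplicity of $\lambda_m$, i.e. the dimension of its eigenspace. Theorem~\ref{thm:eigenvalue_is_root} parametrizes this eigenspace by the map $\coord{u}\mapsto\coord{v}$ of~\eqref{eq:eigenvector_via_P} from $\ker\Pm_L(\lambda_m)$; since $\Pm_0(\lambda_m)=\Id_K$, the top block of $\coord{v}$ equals $\coord{u}$, so this map is injective and hence a linear isomorphism onto the eigenspace. Therefore $a_m = \dim\ker\Pm_L(\lambda_m)$.

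The crux is to bound the multiplicity $b_m$ of $\lambda_m$ as a root of $\det\Pm_L(x)$ from below by the same null-space dimension, $b_m \geq \dim\ker\Pm_L(\lambda_m)$, which I would prove by a column-factoring argument. Let $r=\dim\ker\Pm_L(\lambda_m)$ and choose a constant invertible matrix $W$ whose last $r$ columns span $\ker\Pm_L(\lambda_m)$. Then the last $r$ columns of $\Pm_L(x)W$ vanish at $x=\lambda_m$, so every entry there is divisible by $(x-\lambda_m)$; factoring this out writes $\Pm_L(x)W = \tilde\Pm(x)\,\mathrm{diag}(1,\ldots,1,(x-\lambda_m),\ldots,(x-\lambda_m))$ with $r$ factors $(x-\lambda_m)$ and $\tilde\Pm(x)$ still a matrix polynomial. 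Taking determinants gives $\det\Pm_L(x) = (\det W)^{-1}(x-\lambda_m)^r\det\tilde\Pm(x)$, so $(x-\lambda_m)^r \mid \det\Pm_L(x)$, i.e. $b_m \geq r$. Equivalently, this is the statement that the order of vanishing of $\det\Pm_L$ at $\lambda_m$ is at least the number of invariant factors of $\Pm_L$ that vanish there, which can also be read off the Smith normal form.

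Finally I would combine the pieces: $\sum_m a_m = N = \deg\det\Pm_L = \sum_m b_m$, while $b_m \geq a_m$ for every $m$, and these together force $b_m = a_m$ for all $m$. Hence $p_\Adj(x)$ and $\det\Pm_L(x)$ are degree-$N$ polynomials with identical roots and multiplicities, so they agree up to a multiplicative constant, giving~\eqref{char_is_det}. I expect the inequality $b_m \geq \dim\ker\Pm_L(\lambda_m)$ to be the main obstacle, since it is the one step where the genuinely matrix (rather than scalar) nature of $\Pm_L$ enters; diagonalizability is what upgrades this one-sided bound into the equality $a_m=b_m$ via the degree count.
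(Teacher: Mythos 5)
Your proposal is correct and follows essentially the same route as the paper's proof: identical root sets via Theorem~\ref{thm:eigenvalue_is_root}, the identification $a_m=\dim\ker\Pm_L(\lambda_m)$ from diagonalizability, the degree count $\deg\det\Pm_L(x)=KL=N$, and the lower bound $b_m\geq\dim\ker\Pm_L(\lambda_m)$ combined with $\sum_m a_m = \sum_m b_m = N$ to force $b_m=a_m$. The only difference is that where the paper cites Lemma~2.2 of~\cite{Duran:96} for that lower bound, you supply your own correct column-factoring proof (choosing invertible $W$ whose last $r$ columns span $\ker\Pm_L(\lambda_m)$, so those columns of $\Pm_L(x)W$ are divisible by $x-\lambda_m$ and hence $(x-\lambda_m)^r$ divides $\det\Pm_L(x)$), which makes your argument self-contained at exactly the one step the paper outsources.
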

\begin{proof}
The relation~\eqref{eq:eigenvector_via_P} in Theorem~\ref{thm:eigenvalue_is_root} establishes a bijective linear mapping between the eigenvectors of $\Adj$ corresponding to the eigenvalue $\lambda_m$ and the null-space of the scalar matrix $\Pm_{L}(\lambda_m)$, i.e., $\ker\Pm_{L}(\lambda_m)$ .
Since matrix $\Adj$ is diagonalizable, each eigenvalue $\lambda_m$ has exactly $a_m$ linearly independent eigenvectors.
Hence, by~\eqref{eq:eigenvector_via_P}, the null-space of $\Pm_{L}(\lambda_m)$ contains $a_m$ linearly independent vectors, and its dimension is
$$
\dim\ker\Pm_{L}(\lambda_m)=a_m.
$$

Furthermore, according to Theorem~\ref{thm:eigenvalue_is_root}, polynomials $p_\Adj(x)$ and $\det \Pm_{L}(x)$ have the same roots. Hence, similarly to~\eqref{eq:characteristic_polynomial}, we write
\begin{equation}
\label{eq:det_theorem}
\det \Pm_{L}(x) = c \prod_{m=0}^{M-1}(x-\lambda_m)^{b_m},
\end{equation}
where $b_m$ is the multiplicity of $\lambda_m$ as a zero of $\Pm_{L}(x)$, and $c$ is a normalization constant.
According to~\eqref{eq:degree_determinant}, the degree of the polynomial~\eqref{eq:det_theorem} is
\begin{eqnarray*}
\deg \det \Pm_{L}(x) &=& KL \\
&=& N.
\end{eqnarray*}
Hence, we obtain
\begin{equation}
\label{eq:sum_b}
b_0+b_1+\ldots+b_{M-1} = \deg \det \Pm_{L}(x) = N,
\end{equation}
and the polynomials $\det \Pm_{L}(x)$ and $p_\Adj(x)$ have the same degree.

The multiplicity $b_m$ is bounded from below by the dimension of the null-space of $\Pm_{L}(\lambda_m)$ (see Lemma 2.2 in~\cite{Duran:96}):
$$
b_m \geq \dim\ker\Pm_{L}(\lambda_m)=a_m.
$$
Hence, the polynomials $\det \Pm_{L}(x)$ and $p_\Adj(x)$ can have the same degree $N$ if and only if $b_m=a_m$ for all $0\leq m < M$. It follows that the polynomials~\eqref{eq:characteristic_polynomial} and~\eqref{eq:det_theorem} are equal, up to a normalization factor, i.e., that the equality~\eqref{char_is_det} holds.
\end{proof}

The following result follows immediately from the proof of Theorem~\ref{thm:char_is_det_diagonalizable}.

\begin{corollary}
\label{cor:number_of_eigenvalues}
An arbitrary (not necessarily diagonalizable) block tridiagonal matrix $\Adj$ of the form~\eqref{eq:block_tridiagonal_form} has at least
\begin{equation}
\label{eq:number_of_eigenvalues}
M \geq N/K
\end{equation}
distinct eigenvalues $\lambda_0$, $\ldots$, $\lambda_{M-1}$.
\end{corollary}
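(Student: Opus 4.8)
The plan is to read the bound directly off the two facts assembled in the proof of Theorem~\ref{thm:char_is_det_diagonalizable}, the first of which never used diagonalizability. By Theorem~\ref{thm:eigenvalue_is_root} the eigenvalues of $\Adj$ are exactly the zeros of $\det\Pm_L(x)$, so the distinct eigenvalues $\lambda_0,\ldots,\lambda_{M-1}$ are precisely the distinct roots of $\det\Pm_L(x)$. Factoring as in~\eqref{eq:det_theorem},
\[
\det \Pm_L(x) = c\prod_{m=0}^{M-1}(x-\lambda_m)^{b_m},
\]
the degree identity~\eqref{eq:degree_determinant} gives $\sum_{m=0}^{M-1} b_m = \deg\det\Pm_L(x) = KL = N$, which is exactly~\eqref{eq:sum_b}. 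I would stress that this holds for an arbitrary matrix of the form~\eqref{eq:block_tridiagonal_form}, whether or not it is diagonalizable.

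Second, I would try to bound each multiplicity $b_m$ from above by the block size $K$. The natural ingredient is that $\Pm_L(\lambda_m)$ is a $K\times K$ scalar matrix, so $\dim\ker\Pm_L(\lambda_m)\le K$; via the bijection~\eqref{eq:eigenvector_via_P} this null-space dimension is the geometric multiplicity of $\lambda_m$. If one can upgrade this to $b_m\le K$ for every $m$, then
\[
N = \sum_{m=0}^{M-1} b_m \le \sum_{m=0}^{M-1} K = MK,
\]
and rearranging gives $M\ge N/K$, which is~\eqref{eq:number_of_eigenvalues}.

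The hard part is precisely the step $b_m\le K$. What comes for free is only the \emph{lower} bound $b_m \ge \dim\ker\Pm_L(\lambda_m)$ (Lemma 2.2 in~\cite{Duran:96}) together with $\dim\ker\Pm_L(\lambda_m)\le K$; these control the geometric multiplicity but say nothing directly about how large the root multiplicity $b_m$ of the determinant can be. In the diagonalizable setting of Theorem~\ref{thm:char_is_det_diagonalizable} the gap closes automatically, because there the degree count forces $b_m=\dim\ker\Pm_L(\lambda_m)=a_m\le K$. For a general, non-diagonalizable matrix I expect this to be the genuine obstacle: the determinant-root multiplicity $b_m$ can strictly exceed the null-space dimension, so $b_m\le K$ is not automatic from the geometric data alone. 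My fallback would therefore be either to retreat to the geometric multiplicities $g_m=\dim\ker\Pm_L(\lambda_m)$, which only yields the weaker estimate $\sum_m g_m\le MK$, or to search for an additional structural bound on $b_m$ coming from the recurrence~\eqref{eq:recurrence} itself. In short, I would isolate the verification of $b_m\le K$ in the non-diagonalizable case as the one step that needs a careful, possibly separate, argument.
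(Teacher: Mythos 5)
Your diagnosis is exactly right, and the step you quarantined cannot be repaired: the bound $b_m\le K$ genuinely fails for non-diagonalizable matrices, and with it the corollary as stated. A counterexample exists for every block size: for any $K$ and $L\ge 2$, take $\Bm_n=\lambda\Id_K$, $\Dm_n=\Id_K$, $\Cm_n=\ZeroMatrix_K$ in~\eqref{eq:block_tridiagonal_form}, so that $\Adj=\lambda\Id_N+\coord{S}$ with $\coord{S}$ the nilpotent block shift. Condition~\eqref{eq:nonsingular_D} holds, and the recurrence~\eqref{eq:recurrence} gives $\Pm_n(x)=(x-\lambda)^n\Id_K$, hence $\det\Pm_L(x)=(x-\lambda)^{KL}=(x-\lambda)^N$. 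This matrix has $M=1$ distinct eigenvalue while $N/K=L\ge 2$, and indeed $b_0=N>K$ even though $\dim\ker\Pm_L(\lambda)=K$. So, precisely as you anticipated, only the geometric multiplicities are controlled by the size of the scalar matrices $\Pm_L(\lambda_m)$; the determinant-root multiplicity $b_m$ absorbs the entire Jordan structure and can be as large as $N$. (For $K=1$ the ordinary $N\times N$ Jordan block makes the same point.)

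It is worth comparing this with the paper's own proof, which is the one-line assertion that ``the multiplicity $a_m$ of an eigenvalue $\lambda_m$ corresponds to the dimension of the null-space of a $K\times K$ matrix $\Pm_L(\lambda_m)$, hence $a_m\le K$,'' followed by $N=a_0+\cdots+a_{M-1}\le KM$. This commits exactly the conflation you warned against: the identity $\sum_m a_m=N$ holds for \emph{algebraic} multiplicities, whereas the identification $a_m=\dim\ker\Pm_L(\lambda_m)$ is established in the proof of Theorem~\ref{thm:char_is_det_diagonalizable} only under the diagonalizability hypothesis---the very hypothesis the corollary advertises dropping. Your first half, namely $\sum_m b_m=\deg\det\Pm_L(x)=N$ via~\eqref{eq:degree_determinant} with no diagonalizability assumption, is correct and matches the paper; your refusal to assert $b_m\le K$ in general is sound judgement rather than a shortfall of your argument. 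The result survives in two honest forms: as stated but restricted to diagonalizable $\Adj$, where the degree count in Theorem~\ref{thm:char_is_det_diagonalizable} forces $b_m=a_m\le K$ and both your argument and the paper's close; or, for arbitrary $\Adj$, as the bound $\dim\ker(\Adj-\lambda_m\Id_N)=\dim\ker\Pm_L(\lambda_m)\le K$ on each geometric multiplicity, which follows from Theorem~\ref{thm:eigenvalue_is_root} alone but no longer yields $M\ge N/K$.
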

\begin{proof}
Since the multiplicity $a_m$ of an eigenvalue $\lambda_m$ corresponds to the dimension of the null-space of a $K\times K$ matrix $\Pm_{L}(\lambda_m)$, it satisfies $a_m \leq K$.
Hence, $N=a_0+\ldots+a_{M-1} \leq K + K +\ldots + K = KM$, which immediately yields~\eqref{eq:number_of_eigenvalues}.
\end{proof}

\subsection{Eigenvector matrix}

Theorems~\ref{thm:eigenvalue_is_root} and~\ref{thm:char_is_det_diagonalizable} demonstrate the connection between the eigenvalues of block tridiagonal matrices
and the zeros of corresponding matrix polynomials, as well as the relation between their eigenvectors and the null-space of the matrix polynomials.

In the following theorems, we provide closed-form expressions for the eigenvector matrices of diagonalizable block tridiagonal matrices~\eqref{eq:block_tridiagonal_form} and their inverses.

\begin{theorem}
\label{thm:eigenvector_matrix}
Consider a block tridiagonal matrix~\eqref{eq:block_tridiagonal_form} that is diagonalizable, i.e., can be represented in the form~\eqref{eq:eigendecomposition}.
As before, let $a_m$ denote the multiplicity of the eignevalue $\lambda_m$.

Let $\Hm_m$ denote a $K\times a_m$ matrix with columns given by basis vectors for the null-space of the scalar matrix $\Pm_{L}(\lambda_m)$.
Then the matrix
\begin{equation}
\label{eq:eigenvector_matrix}
\Vm = \begin{pmatrix}
\Pm_0(\lambda_0)\Hm_0 & \ldots & \Pm_0(\lambda_{M-1})\Hm_{M-1} \\
\vdots &  & \vdots \\
\Pm_{L-1}(\lambda_0)\Hm_0 & \ldots & \Pm_{L-1}(\lambda_{M-1})\Hm_{M-1}
\end{pmatrix}
\end{equation}
is an eigenvector matrix of $\Adj$, where the $(n,m)$th block is given by $\Pm_n(\lambda_m)\Hm_m$, $0\leq n < L$, $0\leq m < M$. In particular, the columns of the matrix
$$
\begin{pmatrix}
\Pm_0(\lambda_m)\Hm_m \\
\vdots \\
\Pm_{L-1}(\lambda_m)\Hm_m
\end{pmatrix}
$$
are linearly independent eigenvectors corresponding to the eigenvalue $\lambda_m$.
\end{theorem}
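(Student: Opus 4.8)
The plan is to build the result directly on Theorem~\ref{thm:eigenvalue_is_root}, supplementing it with two linear-independence arguments---one internal to each block-column of $\Vm$, and one across the distinct eigenvalues. To begin, fix an index $m$ and let $\coord{u}$ range over the columns of $\Hm_m$. Each such $\coord{u}$ lies in $\ker\Pm_L(\lambda_m)$ by the definition of $\Hm_m$, so Theorem~\ref{thm:eigenvalue_is_root} applies verbatim and guarantees that $\left(\Pm_0(\lambda_m)\coord{u},\ldots,\Pm_{L-1}(\lambda_m)\coord{u}\right)^T$ is an eigenvector of $\Adj$ for the eigenvalue $\lambda_m$. Assembling these vectors over all columns of $\Hm_m$ reproduces exactly the $m$th block-column of $\Vm$, so every column of $\Vm$ is an eigenvector.

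The next step is to show that, within a fixed block-column, these eigenvectors are linearly independent. The key observation is the normalization $\Pm_0(\lambda_m)=\Id_K$: the topmost $K$-block of the $m$th block-column is precisely $\Hm_m$. Consequently, any vanishing linear combination of the columns, read off in this top block, becomes a vanishing linear combination of the columns of $\Hm_m$; since those columns form a basis of $\ker\Pm_L(\lambda_m)$, all coefficients must be zero. Equivalently, the linear map $\coord{u}\mapsto\left(\Pm_0(\lambda_m)\coord{u},\ldots,\Pm_{L-1}(\lambda_m)\coord{u}\right)^T$ is injective, so its image has dimension $a_m$ and the block-column contributes $a_m$ independent eigenvectors, establishing the final assertion of the theorem.

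It remains to combine the block-columns into a full independent set. Here I would invoke the elementary spectral fact that the sum of eigenspaces belonging to distinct eigenvalues is direct, so a union of independent sets, one drawn from each eigenspace, is again independent. Paired with the per-block independence above, this yields $a_0+\cdots+a_{M-1}$ linearly independent eigenvectors of $\Adj$. Finally, since $\Adj$ is diagonalizable, the proof of Theorem~\ref{thm:char_is_det_diagonalizable} identifies $a_m=\dim\ker\Pm_L(\lambda_m)$ and $\sum_m a_m=N$, so $\Vm$ has exactly $N$ independent columns and is therefore a genuine eigenvector matrix. The only delicate point---and it is bookkeeping rather than a genuine obstacle---is to draw the dimension count $a_m=\dim\ker\Pm_L(\lambda_m)$ from the diagonalizability hypothesis rather than assume it; once that is in hand, the argument rests entirely on the $\Pm_0=\Id_K$ normalization and standard facts about eigenspaces.
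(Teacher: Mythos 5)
Your proposal is correct, but it is organized differently from the paper's proof, and the difference is worth noting. The paper argues wholesale at the matrix level: it stacks the three-term relation~\eqref{eq:three_term_relation} into the single identity~\eqref{eq:AP}, evaluates at $x=\lambda_m$, and right-multiplies by $\Hm_m$ so that the residual block $\Dm_{L-1}\Pm_L(\lambda_m)\Hm_m$ vanishes, obtaining $\Adj\Vm=\Vm\Eig$ in one computation; it then simply asserts that this coincides with the eigendecomposition~\eqref{eq:eigendecomposition}, leaving the invertibility of $\Vm$ (equivalently, the linear independence of its columns) implicit in the diagonalizability hypothesis. You instead inherit the eigenvector property column-by-column from Theorem~\ref{thm:eigenvalue_is_root} (legitimately --- it is stated as an iff, and each column of $\Hm_m$ is nonzero, so the resulting vectors are genuine eigenvectors) and spend your effort on precisely the point the paper glosses over: injectivity of $\coord{u}\mapsto\bigl(\Pm_0(\lambda_m)\coord{u},\ldots,\Pm_{L-1}(\lambda_m)\coord{u}\bigr)^T$ via the normalization $\Pm_0(\lambda_m)=\Id_K$, directness of the sum of eigenspaces attached to distinct eigenvalues, and the dimension count $a_m=\dim\ker\Pm_L(\lambda_m)$ with $\sum_m a_m=N$, correctly sourced from diagonalizability as in the proof of Theorem~\ref{thm:char_is_det_diagonalizable}. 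Each route buys something: the paper's stacked identity~\eqref{eq:AP} is the reusable engine of the paper --- it is differentiated later to obtain the Jordan-chain result of Theorem~\ref{thm:generalized_eigenvectors1} --- while your argument actually proves the theorem's final claim that the $a_m$ columns in each block are linearly independent eigenvectors and that $\Vm$ is invertible, rather than reading these off from the asserted factorization. In that sense your write-up is the more complete of the two; the only mild redundancy is that your appeal to the directness of eigenspaces and the top-block injectivity could be merged, since the top blocks $\Hm_0,\ldots,\Hm_{M-1}$ alone do not suffice across different $m$ (they may overlap in $\C^K$), so the cross-eigenvalue step genuinely requires the spectral directness argument you invoke.
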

\begin{proof}
Using the block tridiagonal structure~\eqref{eq:block_tridiagonal_form} of $\Adj$, we can write the three-term relation~\eqref{eq:three_term_relation} for $0\leq n < N$ in the matrix-vector form as
\begin{equation}
\label{eq:AP}
\Adj \begin{pmatrix} \Pm_0(x) \\ \vdots \\ \Pm_{L-2}(x) \\ \Pm_{L-1}(x) \end{pmatrix} =
\begin{pmatrix} x\Pm_0(x) \\ \vdots \\ x\Pm_{L-2}(x) \\ x\Pm_{L-1}(x) - \Dm_{L-1}\Pm_{L}(x) \end{pmatrix}.
\end{equation}
Since the columns of $\Hm_m$ are basis vectors of $\ker\Pm_{L}(\lambda_m)$, the product $\Pm_{L}(\lambda_m)\Hm_m = \ZeroMatrix$ is a $K\times a_m$ zero matrix. Then it follows from~\eqref{eq:AP} that
\begin{eqnarray*}
&& \Adj \begin{pmatrix} \Pm_0(\lambda_m)\Hm_m \\ \vdots \\ \Pm_{L-2}(\lambda_m)\Hm_m \\ \Pm_{L-1}(\lambda_m)\Hm_m \end{pmatrix} \\
&=&
\begin{pmatrix} \lambda_m\Pm_0(\lambda_m)\Hm_m \\ \vdots \\ \lambda_m\Pm_{L-2}(\lambda_m)\Hm_m \\ \lambda_m\Pm_{L-1}(\lambda_m)\Hm_m-\Dm_{L-1}\Pm_{L}(\lambda_m)\Hm_m \end{pmatrix} \\
&=&
\begin{pmatrix} \Pm_0(\lambda_m)\Hm_m \\ \vdots \\ \Pm_{L-2}(\lambda_m)\Hm_m \\ \Pm_{L-1}(\lambda_m)\Hm_m \end{pmatrix}
\lambda_m\Id_{a_m}.
\end{eqnarray*}
Hence, for $\Vm$ in~\eqref{eq:eigenvector_matrix}, we obtain
$$
\Adj\Vm = \Vm \begin{pmatrix} \lambda_0\Id_{a_0} \\ & \ddots \\ && \lambda_{M-1}\Id_{a_{M-1}} \end{pmatrix} = \Vm\Eig.
$$
This decomposition is equal to the eigendecomposition~\eqref{eq:eigendecomposition}, which confirms that $\Vm$ is an eigenvector matrix for $\Adj$.
\end{proof}

Theorem~\ref{thm:eigenvector_matrix} gives a closed-form expression for the eigenvector matrix $\Vm$. When $\Vm$ is orthogonal, e.g., when the matrix $\Adj$ is symmetric or Hermitian, the expression~\eqref{eq:eigenvector_matrix} is also sufficient to find the inverse $\Vm^{-1}=\Vm^H$ of the eigenvector matrix. For the cases when $\Vm$ is not orthogonal, the following theorem provides a closed-form expression for its inverse.

\begin{theorem}
\label{thm:inverse_eigenvector_matrix}
Consider matrix polynomials $\widetilde{\Pm}_n(x)$ generated by the recursion
\begin{equation}
\label{eq:tilde_P}
x\cdot \widetilde{\Pm}_n(x) = \Dm^T_{n-1} \widetilde{\Pm}_{n-1}(x) + \Bm^T_n \widetilde{\Pm}_n(x) + \Cm^T_n \widetilde{\Pm}_{n+1}(x),
\end{equation}
with the initial conditions $\widetilde{\Pm}_{-1}(x)=\ZeroMatrix_K$ and $\widetilde{\Pm}_1(x)=\Id_K$. Assuming that all blocks $\Cm_n$ are non-singular,
i.e., $\det \Cm_n \neq 0$, the inverse of the eigenvector matrix~\eqref{eq:eigenvector_matrix} has the form
\begin{equation}
\label{eq:eigenvector_matrix_inverse}
\Vm^{-1} = \begin{pmatrix}
\widetilde{\Pm}_0(\lambda_0)\widetilde{\Hm}_0 & \ldots & \widetilde{\Pm}_0(\lambda_{M-1})\widetilde{\Hm}_{M-1} \\
\vdots &  & \vdots \\
\widetilde{\Pm}_{L-1}(\lambda_0)\widetilde{\Hm}_0 & \ldots & \widetilde{\Pm}_{L-1}(\lambda_{M-1})\widetilde{\Hm}_{M-1}
\end{pmatrix}^T,
\end{equation}
where each $\widetilde{\Hm}_m$ is a $K\times a_m$ matrix with columns given by the basis vectors for the null-space of the scalar matrix $\widetilde{\Pm}_{L}(\lambda_m)$.
\end{theorem}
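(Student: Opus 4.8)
The plan is to recognize $\widetilde{\Pm}_n(x)$ as the matrix polynomials generated by the \emph{transpose} matrix $\Adj^T$ and then apply Theorem~\ref{thm:eigenvector_matrix} to $\Adj^T$. Transposing $\Adj$ in~\eqref{eq:block_tridiagonal_form} produces a block tridiagonal matrix whose diagonal blocks are $\Bm_n^T$, whose sub-diagonal blocks are $\Dm_n^T$, and whose super-diagonal blocks are $\Cm_n^T$. Writing the recurrence~\eqref{eq:recurrence} for this matrix reproduces exactly~\eqref{eq:tilde_P}, so $\widetilde{\Pm}_n(x)$ is precisely the polynomial family attached to $\Adj^T$. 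The role of the non-singularity condition~\eqref{eq:nonsingular_D} is now played by the super-diagonal blocks $\Cm_n^T$ of $\Adj^T$, which is why the theorem assumes $\det\Cm_n\neq 0$; this guarantees that~\eqref{eq:tilde_P} is well-defined. Moreover, $\Adj$ and $\Adj^T$ share the same characteristic polynomial, hence the same eigenvalues $\lambda_0,\ldots,\lambda_{M-1}$ with identical algebraic multiplicities $a_0,\ldots,a_{M-1}$; since $\Adj$ is diagonalizable so is $\Adj^T$, and the geometric multiplicities agree as well.

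First I would apply Theorem~\ref{thm:eigenvector_matrix} to $\Adj^T$, obtaining an eigenvector matrix $\widetilde{\Vm}$ whose $(n,m)$th block is $\widetilde{\Pm}_n(\lambda_m)\widetilde{\Hm}_m$ and which satisfies $\Adj^T\widetilde{\Vm}=\widetilde{\Vm}\Eig$, where $\widetilde{\Hm}_m$ has columns forming a basis of $\ker\widetilde{\Pm}_L(\lambda_m)$. Transposing this identity gives $\widetilde{\Vm}^T\Adj=\Eig\widetilde{\Vm}^T$, so the rows of $\widetilde{\Vm}^T$ are left eigenvectors of $\Adj$, grouped by eigenvalue in the same order as $\Eig$; note that $\widetilde{\Vm}^T$ is exactly the matrix appearing on the right-hand side of~\eqref{eq:eigenvector_matrix_inverse}. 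On the other hand, from $\Adj\Vm=\Vm\Eig$ (Theorem~\ref{thm:eigenvector_matrix}) we obtain $\Vm^{-1}\Adj=\Eig\Vm^{-1}$, so the rows of $\Vm^{-1}$ are also left eigenvectors of $\Adj$ in the same grouping. Hence $\widetilde{\Vm}^T$ and $\Vm^{-1}$ are two matrices of left eigenvectors spanning the same left eigenspaces in the same order.

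Next I would compare the two. Because $\Adj$ is diagonalizable, left eigenvectors belonging to distinct eigenvalues are linearly independent, and within each eigenvalue $\lambda_m$ both sets span the same $a_m$-dimensional left eigenspace. Therefore $\widetilde{\Vm}^T=\Tm\,\Vm^{-1}$ for a block-diagonal matrix $\Tm=\mathrm{diag}(\Tm_0,\ldots,\Tm_{M-1})$ with invertible blocks $\Tm_m\in\C^{a_m\times a_m}$; the off-diagonal blocks vanish since a left eigenvector for $\lambda_m$ cannot contribute to the left eigenspace of $\lambda_{m'}$ for $m'\neq m$. The claimed equality $\Vm^{-1}=\widetilde{\Vm}^T$ in~\eqref{eq:eigenvector_matrix_inverse} then amounts to $\Tm=\Id$, which I would secure using the gauge freedom in the null-space bases: replacing $\widetilde{\Hm}_m$ by $\widetilde{\Hm}_m(\Tm_m^T)^{-1}$ is still a basis of $\ker\widetilde{\Pm}_L(\lambda_m)$ and multiplies $\widetilde{\Vm}^T$ on the left by $\Tm^{-1}$, yielding $\Vm^{-1}=\widetilde{\Vm}^T$ exactly.

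The main obstacle is precisely this last normalization: the bases $\Hm_m$ and $\widetilde{\Hm}_m$ are a priori unrelated, so the two families of eigenvectors agree only up to the block-diagonal change of basis $\Tm$, and the honest reading of the statement is that, once $\Hm_m$ is fixed, the bases $\widetilde{\Hm}_m$ are chosen compatibly, equivalently so that the biorthogonality $\widetilde{\Vm}^T\Vm=\Id$ holds. I would make this dependence explicit rather than leave the impression that the identity~\eqref{eq:eigenvector_matrix_inverse} holds for arbitrary independent choices of the null-space bases.
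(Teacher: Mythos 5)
Your proposal follows the same core route as the paper's proof: transpose the eigendecomposition, observe that $\Adj^T$ is again block tridiagonal with diagonal blocks $\Bm_n^T$, sub-diagonal blocks $\Dm_n^T$, and super-diagonal blocks $\Cm_n^T$ (so that $\det\Cm_n\neq 0$ plays the role of~\eqref{eq:nonsingular_D}), recognize~\eqref{eq:tilde_P} as the recurrence~\eqref{eq:recurrence} for $\Adj^T$, and apply Theorem~\ref{thm:eigenvector_matrix} to $\Adj^T$ to identify $\left(\Vm^{-1}\right)^T$ with the matrix built from $\widetilde{\Pm}_n(\lambda_m)\widetilde{\Hm}_m$. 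Where you go beyond the paper is in the final normalization step, and you are right to do so: the paper's proof silently conflates ``$\left(\Vm^{-1}\right)^T$ is \emph{an} eigenvector matrix of $\Adj^T$'' with ``$\left(\Vm^{-1}\right)^T$ equals \emph{the particular} eigenvector matrix produced by Theorem~\ref{thm:eigenvector_matrix} from arbitrarily chosen bases $\widetilde{\Hm}_m$.'' As you observe, for independent choices of $\Hm_m$ and $\widetilde{\Hm}_m$ one only gets $\widetilde{\Vm}^T=\Tm\,\Vm^{-1}$ with $\Tm=\widetilde{\Vm}^T\Vm$ block diagonal (it commutes with $\Eig$, whose distinct scalar blocks force the off-diagonal blocks to vanish) and invertible, so~\eqref{eq:eigenvector_matrix_inverse} holds only after rescaling $\widetilde{\Hm}_m\mapsto\widetilde{\Hm}_m\left(\Tm_m^T\right)^{-1}$, i.e., after choosing the bases so that the biorthogonality $\widetilde{\Vm}^T\Vm=\Id_N$ holds. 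Your version is therefore not just correct but strictly more careful than the paper's own argument; the only cosmetic point is that the statement's initial condition ``$\widetilde{\Pm}_1(x)=\Id_K$'' is evidently a typo for $\widetilde{\Pm}_0(x)=\Id_K$, which your reading implicitly corrects.
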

\begin{proof}
This result follows directly from Theorem~\eqref{thm:eigenvector_matrix}. By transposing both sides of the eigendecomposition~\eqref{eq:eigendecomposition}, we obtain
$$
\Adj^T = \left(\Vm^{-1}\right)^T\Eig\Vm^T.
$$
Hence, $\left(\Vm^{-1}\right)^T$ is the eigenvector matrix of $\Adj^T$. But the transposed matrix $\Adj^T$ still has the block tridiagonal form~\eqref{eq:block_tridiagonal_form}.
Provided that the blocks $\Cm_n$ are non-singular, we recursively construct the corresponding matrix polynomials~\eqref{eq:tilde_P} and apply Theorem~\eqref{thm:eigenvector_matrix} to obtain
the eigenvector matrix for $\Adj^T$:
$$
\left(\Vm^{-1}\right)^T = \begin{pmatrix}
\widetilde{\Pm}_0(\lambda_0)\widetilde{\Hm}_0 & \ldots & \widetilde{\Pm}_0(\lambda_{M-1})\widetilde{\Hm}_{M-1} \\
\vdots &  & \vdots \\
\widetilde{\Pm}_{L-1}(\lambda_0)\widetilde{\Hm}_0 & \ldots & \widetilde{\Pm}_{L-1}(\lambda_{M-1})\widetilde{\Hm}_{M-1}
\end{pmatrix},
$$
which immediately yields the expression~\eqref{eq:eigenvector_matrix_inverse}.
\end{proof}


\section{Jordan decomposition of block tridiagonal matrices}
\label{sec:Jordan}

Theorem~\ref{thm:eigenvalue_is_root} shows the closed-form expression~\eqref{eq:eigenvector_via_P} for eigenvectors of a block tridiagonal matrix~\eqref{eq:block_tridiagonal_form} regardless of whether the matrix is diagonalizable or not. However, the results in Theorems~\ref{thm:char_is_det_diagonalizable},~\ref{thm:eigenvector_matrix}, and~\ref{thm:inverse_eigenvector_matrix} apply to diagonalizable matrices only. If the block tridiagonal matrix~\eqref{eq:block_tridiagonal_form} does not have a complete set of $N$ linearly independent eigenvectors, we cannot construct its eigendcomposition~\eqref{eq:eigendecomposition}. Instead, we need to find the generalized eigenvectors of the matrix and construct its Jordan decomposition~\cite{Lancaster:85,Gantmacher:59}.

Consider an eigenvector $\coord{v}$ that corresponds to an eigenvalue $\lambda$ of matrix $\Adj$. This eigenvector generates a \emph{Jordan chain} of \emph{generalized eigenvectors} $\coord{v}_0, \coord{v}_1, \ldots, \coord{v}_{R-1}$, where $\coord{v}_0=\coord{v}$, if these vectors satisfy the relation
\begin{equation}
\label{eq:generalized_eigenvector_relation1}
\Adj\coord{v}_r = \lambda\coord{v}_r + \coord{v}_{r-1}
\end{equation}
for $0\leq r < R$. The relation~\eqref{eq:generalized_eigenvector_relation1} for generalized eigenvectors is equivalent to the condition
\begin{equation}
\label{eq:generalized_eigenvector_relation2}
(\Adj - \lambda\Id_N)^{r+1}\coord{v}_r = 0.
\end{equation}

Determining which eigenvectors give rise to a Jordan chain is a challenge. Furthermore, similarly to  the calculation of eigenvectors, the direct calculation of generalized eigenvectors requires the solution of linear systems of the form~\eqref{eq:generalized_eigenvector_relation1}. Both problems are a prohibitively expensive procedures for matrices $\Adj$ of very large sizes.

In the following theorems, we propose two approaches that simplify the discovery and calculation of generalized eigenvectors of block tridiagonal matrices~\eqref{eq:block_tridiagonal_form}. Under appropriate conditions, we can take advantage of the matrix polynomials generated by the recurrence~\eqref{eq:recurrence} to determine whether an eigenvector generates a Jordan chain and then construct the corresponding generalized eigenvectors.

\begin{theorem}
\label{thm:generalized_eigenvectors1}
Consider an eigenvector $\coord{v}$ of the form~\eqref{eq:eigenvector_via_P} that corresponds to the eigenvalue $\lambda$ of a block tridiagonal matrix $\Adj$. This eigenvector generates a Jordan chain of $R$ generalized eigenvectors $\coord{v}_0, \coord{v}_1, \ldots, \coord{v}_{R-1}$ if the vector $\coord{u}$ from the null-space of $\Pm_L(\lambda)$ satisfies the property
\begin{equation}
\label{eq:condition_on_u}
\Pm_L^{(r)}(\lambda) \coord{u} = 0
\end{equation}
for $0 \leq r < R$, where
$$
\Pm_L^{(r)}(x) = \frac{d^r}{dx^r} \Pm_L(x)
$$
denotes the $r$th derivative of the matrix polynomial $\Pm_L(x)$.
The condition~\eqref{eq:condition_on_u} is equivalent to the requirement that the vector $\coord{u}$ belongs to the null-spaces of matrices $\Pm_L^{(r)}(\lambda)$ for $0 \leq r < R$.

The $r$th generalized eigenvector is then given by
\begin{equation}
\label{eq:vr_via_Pr}
\coord{v}_r = \frac{1}{r!} \begin{pmatrix} \Pm^{(r)}_0(\lambda) \\ \vdots \\ \Pm^{(r)}_{L-2}(\lambda) \\ \Pm^{(r)}_{L-1}(\lambda) \end{pmatrix} \coord{u}.
\end{equation}
\end{theorem}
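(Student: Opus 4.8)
The plan is to verify directly that the vectors $\coord{v}_r$ defined by~\eqref{eq:vr_via_Pr} satisfy the Jordan-chain relation~\eqref{eq:generalized_eigenvector_relation1}, namely $\Adj\coord{v}_r = \lambda\coord{v}_r + \coord{v}_{r-1}$ under the convention $\coord{v}_{-1}=0$. The entire argument rests on differentiating the matrix-vector identity~\eqref{eq:AP} established in the proof of Theorem~\ref{thm:eigenvector_matrix}, rather than manipulating the recurrence~\eqref{eq:recurrence} block by block. This is the natural move because the $\frac{1}{r!}$ factors and the derivatives $\Pm_n^{(r)}$ appearing in~\eqref{eq:vr_via_Pr} are exactly what a repeated differentiation of~\eqref{eq:AP} produces.

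First I would write~\eqref{eq:AP} compactly as $\Adj\,\mathbf{P}(x) = x\,\mathbf{P}(x) - \mathbf{g}(x)$, where $\mathbf{P}(x)$ denotes the block column $(\Pm_0(x),\ldots,\Pm_{L-1}(x))^T$ and $\mathbf{g}(x)$ is the block column whose only nonzero entry is $\Dm_{L-1}\Pm_L(x)$ in the last block. I would then differentiate this identity $r$ times in $x$. Since $\Adj$ is constant, the left-hand side becomes $\Adj\,\mathbf{P}^{(r)}(x)$; on the right-hand side the Leibniz rule applied to $x\,\mathbf{P}(x)$ gives $x\,\mathbf{P}^{(r)}(x) + r\,\mathbf{P}^{(r-1)}(x)$, while the correction term contributes $-\mathbf{g}^{(r)}(x)$, whose only nonzero block is $\Dm_{L-1}\Pm_L^{(r)}(x)$.

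Next I would evaluate at $x=\lambda$ and right-multiply by $\coord{u}$. The hypothesis~\eqref{eq:condition_on_u} states precisely that $\Pm_L^{(r)}(\lambda)\coord{u}=0$ for $0\le r<R$, so for these indices the boundary block $\Dm_{L-1}\Pm_L^{(r)}(\lambda)\coord{u}$ vanishes, leaving $\Adj\,\mathbf{P}^{(r)}(\lambda)\coord{u} = \lambda\,\mathbf{P}^{(r)}(\lambda)\coord{u} + r\,\mathbf{P}^{(r-1)}(\lambda)\coord{u}$. Recognizing from~\eqref{eq:vr_via_Pr} that $\mathbf{P}^{(r)}(\lambda)\coord{u} = r!\,\coord{v}_r$ and $\mathbf{P}^{(r-1)}(\lambda)\coord{u} = (r-1)!\,\coord{v}_{r-1}$, this reads $r!\,\Adj\coord{v}_r = \lambda\,r!\,\coord{v}_r + r\,(r-1)!\,\coord{v}_{r-1}$; dividing by $r!$ yields exactly~\eqref{eq:generalized_eigenvector_relation1}. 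The case $r=0$ reproduces the eigenvector equation $\Adj\coord{v}_0=\lambda\coord{v}_0$, consistent with Theorem~\ref{thm:eigenvalue_is_root}.

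The computation is short; the only point demanding care is the boundary term, and that is also where the whole mechanism lives. The single correction in~\eqref{eq:AP} involves $\Pm_L$, and differentiating it $r$ times produces exactly $\Pm_L^{(r)}$, so the chain of vanishing conditions~\eqref{eq:condition_on_u} is matched level by level to the requirement that each $\coord{v}_r$ close up the Jordan chain. I would conclude by observing that the relation~\eqref{eq:generalized_eigenvector_relation1} just verified is equivalent to~\eqref{eq:generalized_eigenvector_relation2}, so the $\coord{v}_0,\ldots,\coord{v}_{R-1}$ are genuine generalized eigenvectors forming a Jordan chain of length $R$.
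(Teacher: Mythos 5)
Your proposal is correct and takes essentially the same route as the paper's proof: both differentiate the identity~\eqref{eq:AP} $r$ times, apply the Leibniz rule to the product $x\cdot\Pm_n(x)$ to produce the terms $x\Pm_n^{(r)}(x) + r\Pm_n^{(r-1)}(x)$, evaluate at $x=\lambda$, right-multiply by $\coord{u}$, and invoke~\eqref{eq:condition_on_u} to annihilate the boundary block $\Dm_{L-1}\Pm_L^{(r)}(\lambda)\coord{u}$. The only difference is presentational: the paper frames the final step as an induction on $r$, whereas you verify the Jordan-chain relation~\eqref{eq:generalized_eigenvector_relation1} directly from the definition~\eqref{eq:vr_via_Pr} --- the underlying computation is identical.
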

\begin{proof}
Rewrite the equation~\eqref{eq:AP} in the proof of Theorem~\ref{thm:eigenvector_matrix} as
\begin{equation}
\label{eq:AP_is_xP}
\Adj \begin{pmatrix} \Pm_0(x) \\ \vdots \\ \Pm_{L-2}(x) \\ \Pm_{L-1}(x) \end{pmatrix} =
x \begin{pmatrix} \Pm_0(x) \\ \vdots \\ \Pm_{L-2}(x) \\ \Pm_{L-1}(x) \end{pmatrix} -
\begin{pmatrix} \ZeroMatrix_K \\ \vdots \\ \ZeroMatrix_K \\ \Dm_{L-1}\Pm_{L}(x) \end{pmatrix}.
\end{equation}

Taking the $r$th derivative of both sides of~\eqref{eq:AP_is_xP}, and using the fact that\footnote{
The equality~\eqref{eq:xPr} follows from the well-known property of the derivatives of function products:
\begin{equation*}
\Big( f(x) g(x) \Big)^{(r)} = \sum_{k=0}^r {r \choose k} f(x)^{(k)} g(x)^{(r-k)}.
\end{equation*}
Setting $f(x)=x$ and using the fact that $x^{(1)}=1$ and $x^{(r)}=0$ for $r>1$, we immediately obtain~\eqref{eq:xPr}.}
\begin{equation}
\label{eq:xPr}
\left( x \begin{pmatrix} \Pm_0(x) \\ \vdots \\ \Pm_{L-2}(x) \\ \Pm_{L-1}(x) \end{pmatrix} \right)^{(r)} =
x \begin{pmatrix} \Pm^{(r)}_0(x) \\ \vdots \\ \Pm^{(r)}_{L-2}(x) \\ \Pm^{(r)}_{L-1}(x) \end{pmatrix}  +
r \begin{pmatrix} \Pm^{(r-1)}_0(x) \\ \vdots \\ \Pm^{(r-1)}_{L-2}(x) \\ \Pm^{(r-1)}_{L-1}(x) \end{pmatrix},
\end{equation}
we obtain
\begin{equation}
\label{eq:APr_is_xPr}
\Adj \begin{pmatrix} \Pm^{(r)}_0(x) \\ \vdots \\ \Pm^{(r)}_{L-2}(x) \\ \Pm^{(r)}_{L-1}(x) \end{pmatrix} =
x \begin{pmatrix} \Pm^{(r)}_0(x) \\ \vdots \\ \Pm^{(r)}_{L-2}(x) \\ \Pm^{(r)}_{L-1}(x) \end{pmatrix} +
r \begin{pmatrix} \Pm^{(r-1)}_0(x) \\ \vdots \\ \Pm^{(r-1)}_{L-2}(x) \\ \Pm^{(r-1)}_{L-1}(x) \end{pmatrix} -
\begin{pmatrix} \ZeroMatrix_K \\ \vdots \\ \ZeroMatrix_K \\ \Dm_{L-1}\Pm_{L}^{(r)}(x) \end{pmatrix}.
\end{equation}

The closed-form expression~\eqref{eq:vr_via_Pr} is proven by induction. The base case for $r=0$ is established by~\eqref{eq:eigenvector_via_P} in Theorem~\eqref{thm:eigenvalue_is_root}. Assuming that~\eqref{eq:vr_via_Pr} holds for $\coord{v}_{r-1}$, we substitute $x=\lambda$ into~\eqref{eq:APr_is_xPr} and multiply both sides of~\eqref{eq:APr_is_xPr} by vector $\coord{u}$
to obtain the equality
\begin{equation}
\label{eq:inductive_step}
\Adj \begin{pmatrix} \Pm^{(r)}_0(\lambda) \\ \vdots \\ \Pm^{(r)}_{L-2}(\lambda) \\ \Pm^{(r)}_{L-1}(\lambda) \end{pmatrix}\coord{u} =
\lambda \begin{pmatrix} \Pm^{(r)}_0(\lambda) \\ \vdots \\ \Pm^{(\lambda)}_{L-2}(x) \\ \Pm^{(r)}_{L-1}(\lambda) \end{pmatrix}\coord{u} +
r \big( (r-1)! \coord{v}_{r-1} \big) -
\begin{pmatrix} \ZeroMatrix_K \\ \vdots \\ \ZeroMatrix_K \\ \Dm_{L-1}\Pm_{L}^{(r)}(\lambda)\coord{u} \end{pmatrix}.
\end{equation}
Recall that according to~\eqref{eq:condition_on_u} $\Pm_{L}^{(r)}(\lambda)\coord{u}=0$. By comparing the equation~\eqref{eq:inductive_step} with the definition~\eqref{eq:generalized_eigenvector_relation1} of generalized eigenvectors, we conclude
that~\eqref{eq:APr_is_xPr} holds for $\coord{v}_r$. Hence, the proof by induction is complete.
\end{proof}

Theorem~\eqref{thm:generalized_eigenvectors1} provides a method to check whether an eigenvector $\coord{v}$ with a corresponding eigenvalue $\lambda$ generates a Jordan chain of generalized eigenvectors. If the condition~\eqref{eq:condition_on_u} does not hold, the following theorem provides a significantly more general approach to the determination of existence and the subsequent calculation of generalized eigenvectors.

\begin{theorem}
\label{thm:generalized_eigenvectors2}
Consider an eigenvector $\coord{v}$ of the form~\eqref{eq:eigenvector_via_P} that corresponds to the eigenvalue $\lambda$ of a block tridiagonal matrix $\Adj$. This eigenvector generates a Jordan chain of $R$ generalized eigenvectors $\coord{v}_0=\coord{v}, \coord{v}_1, \ldots, \coord{v}_{R-1}$ if for each $0\leq r < R$, the scalar $(-1)^r\lambda^{r+1}$ is an eigenvalue of the matrix $(\Adj-\lambda\Id_N)^{r+1} + (-1)^r\lambda^{r+1}\Id_N$. In this case, each generalized eigenvector $\coord{v}_r$ is the corresponding eigenvector, i.e., it satisfies
\begin{equation}
\label{eq:condition_on_lambda}
\Big( (\Adj-\lambda\Id_N)^{r+1} + (-1)^r\lambda^{r+1}\Id_N  \Big)\coord{v}_r = (-1)^r\lambda^{r+1} \coord{v}_r.
\end{equation}
\end{theorem}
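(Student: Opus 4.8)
The plan is to show that the eigenvalue condition~\eqref{eq:condition_on_lambda} is nothing more than an algebraic restatement of the defining property~\eqref{eq:generalized_eigenvector_relation2} of a Jordan chain. Abbreviate the scalar by $\mu_r = (-1)^r\lambda^{r+1}$ and the matrix by $\Qm_r = (\Adj-\lambda\Id_N)^{r+1} + \mu_r\Id_N$. The eigenvalue equation $\Qm_r\coord{v}_r = \mu_r\coord{v}_r$ expands to $(\Adj-\lambda\Id_N)^{r+1}\coord{v}_r + \mu_r\coord{v}_r = \mu_r\coord{v}_r$, and cancelling the common term $\mu_r\coord{v}_r$ from both sides leaves precisely $(\Adj-\lambda\Id_N)^{r+1}\coord{v}_r = 0$. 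Hence~\eqref{eq:condition_on_lambda} holds for $\coord{v}_r$ if and only if $\coord{v}_r$ lies in $\ker(\Adj-\lambda\Id_N)^{r+1}$, and the statement ``$\mu_r$ is an eigenvalue of $\Qm_r$'' is equivalent to the nontriviality of this kernel.

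First I would record this equivalence as the central computation. Next I would connect it to Jordan chains using the relation between~\eqref{eq:generalized_eigenvector_relation1} and~\eqref{eq:generalized_eigenvector_relation2} already noted in the text: if $\coord{v}_0=\coord{v},\ldots,\coord{v}_{R-1}$ satisfy the chain relation~\eqref{eq:generalized_eigenvector_relation1}, then applying $(\Adj-\lambda\Id_N)$ repeatedly and using $(\Adj-\lambda\Id_N)\coord{v}_0=0$ yields $(\Adj-\lambda\Id_N)^{r+1}\coord{v}_r=0$ for every $r$. Composing this with the algebraic rearrangement above shows that each generalized eigenvector $\coord{v}_r$ of the chain is an eigenvector of $\Qm_r$ for the eigenvalue $\mu_r$, which is exactly the asserted form~\eqref{eq:condition_on_lambda}.

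For the converse direction emphasized in the statement, I would read the hypothesis ``$\mu_r$ is an eigenvalue of $\Qm_r$ for all $0\le r<R$'' through the equivalence as the assertion that $\ker(\Adj-\lambda\Id_N)^{r+1}$ is nontrivial for each such $r$. I would then build the chain from the top down: choose $\coord{v}_{R-1}$ in $\ker(\Adj-\lambda\Id_N)^{R}$ but outside $\ker(\Adj-\lambda\Id_N)^{R-1}$ and define the remaining members recursively by $\coord{v}_{r-1}=(\Adj-\lambda\Id_N)\coord{v}_r$, so that the linking relation~\eqref{eq:generalized_eigenvector_relation1}, and hence~\eqref{eq:generalized_eigenvector_relation2}, holds for the whole chain.

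I expect the delicate point to be precisely this chain-assembly step rather than the one-line algebra. Membership of separate vectors $\coord{v}_r$ in the individual kernels $\ker(\Adj-\lambda\Id_N)^{r+1}$ does not by itself supply the linking relation $(\Adj-\lambda\Id_N)\coord{v}_r=\coord{v}_{r-1}$ that a genuine Jordan chain requires; what is needed is that the nested kernels grow strictly up to rank $R$, so that a vector of exact rank $R$ exists and its images under successive applications of $(\Adj-\lambda\Id_N)$ furnish the lower-rank members. I would therefore be careful to interpret the hypothesis at the maximal index $R$ as guaranteeing this strict growth, after which the algebraic equivalence certifies that every vector so constructed satisfies~\eqref{eq:condition_on_lambda}, completing the proof.
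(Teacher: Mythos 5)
Your opening computation is, in substance, the paper's entire proof: the paper disposes of this theorem in one line (``This result follows directly from the definition~\eqref{eq:generalized_eigenvector_relation2}''), i.e., it reads \eqref{eq:condition_on_lambda} as nothing but a rewriting of $(\Adj-\lambda\Id_N)^{r+1}\coord{v}_r=0$ obtained by cancelling the term $(-1)^r\lambda^{r+1}\coord{v}_r$ from both sides. Your first two paragraphs (the cancellation, plus the observation that a chain satisfying \eqref{eq:generalized_eigenvector_relation1} obeys $(\Adj-\lambda\Id_N)^{r+1}\coord{v}_r=(\Adj-\lambda\Id_N)\coord{v}_0=0$ and hence \eqref{eq:condition_on_lambda}) coincide with the intended argument and are correct.

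The trouble is your converse, and your own diagnosis there is sharper than your repair. As you note, ``$(-1)^r\lambda^{r+1}$ is an eigenvalue of $(\Adj-\lambda\Id_N)^{r+1}+(-1)^r\lambda^{r+1}\Id_N$'' says exactly that $\ker(\Adj-\lambda\Id_N)^{r+1}\neq\{0\}$. But these kernels are nested, $\ker(\Adj-\lambda\Id_N)\subseteq\ker(\Adj-\lambda\Id_N)^2\subseteq\cdots$, so the moment $\lambda$ is an eigenvalue of $\Adj$ the hypothesis holds automatically for \emph{every} $r$ and every $R$ --- including, say, for a diagonalizable $\Adj$, which has no Jordan chain of length $R\geq 2$ at all. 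Consequently the hypothesis contains no information about strict growth of the kernels, and your proposal to ``interpret the hypothesis at the maximal index $R$ as guaranteeing this strict growth'' is not a proof step: strict growth, i.e., $\ker(\Adj-\lambda\Id_N)^{R}\neq\ker(\Adj-\lambda\Id_N)^{R-1}$, is a strictly stronger condition that would have to be \emph{added} to the theorem, not read into it. A second wrinkle you leave unaddressed: even granting strict growth, your top-down construction produces a chain whose bottom vector is $(\Adj-\lambda\Id_N)^{R-1}\coord{v}_{R-1}$, some eigenvector, with no guarantee that it equals the specific $\coord{v}$ of \eqref{eq:eigenvector_via_P} that the theorem says ``generates'' the chain. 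To be fair to you, the paper proves no converse either --- its one-line proof establishes only the equivalence of \eqref{eq:condition_on_lambda} with \eqref{eq:generalized_eigenvector_relation2}, and the ``if'' in the theorem statement is loose in precisely the way you detected --- so the correct resolution is to limit the claim to that equivalence (matching the paper's actual proof), rather than to attempt a chain-assembly argument that the stated hypothesis cannot support.
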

\begin{proof}
This result follows directly from the definition~\eqref{eq:generalized_eigenvector_relation2} of generalized eigenvectors.
\end{proof}

The significance of Theorem~\ref{thm:generalized_eigenvectors2} lies in the fact that powers of block tridiagonal matrices are themselves block tridiagonal matrices with blocks of larger sizes. Namely, the matrix $\Adj-\lambda\Id_N$ is a block tridiagonal matrix with the structure~\eqref{eq:block_tridiagonal_form}. Taken to the power $r$, the matrix $\Adj_r=(\Adj-\lambda\Id_N)^{r+1} + (-1)^r\lambda^{r+1}\Id_N$ in~\eqref{eq:condition_on_lambda} is also a block tridiagonal matrix of the form~\eqref{eq:block_tridiagonal_form}, but with blocks $\Bm_n$, $\Cm_n$, and $\Dm_n$ having sizes $rK$. Hence, we can use the recurrence~\eqref{eq:recurrence} to construct $rK\times rK$ matrix polynomials that correspond to matrix $\Adj_r$. Then, given the eigenvalue $\lambda$, we can use Theorem~\ref{thm:eigenvalue_is_root} to check whether $(-1)^r\lambda^{r+1}$ is an eigenvalue of $\Adj_r$ and construct the corresponding eigenvector $\coord{v}_r$.


\section{Discussion}

In this chapter, we discuss how the results obtained in Chapters~\ref{sec:Eigendecomposition} and~\ref{sec:Jordan} can be advantageous to the calculation of eigenvalues and eigenvectors of block tridiagonal matrices.

\subsection{Eigenvalue calculation}

The relationship between the eigenvalues of a block tridiagonal matrix $\Adj$ and the roots of the determinant of the corresponding matrix polynomial $\Pm_L(x)$ in Theorem~\ref{thm:eigenvalue_is_root} provides an alternative way of calculating the eigenvalues of $\Adj$. While the determinant $\det \Pm_L(x)$ still needs to be factored in order to find the eigenvalues, this problem can be simplified when the blocks $\Bm_n$, $\Cm_n$, and $\Dm_n$ in~\eqref{eq:block_tridiagonal_form} have additional structural properties.

As an illustration, consider the case when all blocks of matrix $\Adj$ commute with each other. If at least one of the blocks is diagonalizable, then all of them are diagonzaliable and have the same eigenvectors, since these are matrices over the complex numbers~\cite{Lancaster:85,Gantmacher:59}. In this case the blocks are factored as
\begin{eqnarray*}
\Bm_n &=& \Um \Eig_{\Bm_n} \Um^{-1}, \\
\Cm_n &=& \Um \Eig_{\Cm_n} \Um^{-1}, \\
\Dm_n &=& \Um \Eig_{\Dm_n} \Um^{-1}.
\end{eqnarray*}
Here, the matrices $\Eig_{\Bm_n}$, $\Eig_{\Cm_n}$, and $\Eig_{\Dm_n}$ are diagonal eigenvalue matrices for the corresponding blocks, and $\Um$ is their common eigenvector matrix.

Hence, the matrix $\Adj$ is similar to the block tridiagonal matrix
\begin{eqnarray*}
&&
\begin{pmatrix}
\Eig_{\Bm_0} & \Eig_{\Dm_0} \\
\Eig_{\Cm_0} & \Eig_{\Bm_1} & \ddots \\
& \ddots & \ddots & \Eig_{\Dm_{L-2}} \\
&& \Eig_{\Cm_{L-2}} & \Eig_{\Bm_{L-1}}
\end{pmatrix} \\
&=&
\begin{pmatrix}
\Um^{-1} \\
& \Um^{-1} \\
&& \ddots \\
&&& \Um^{-1}
\end{pmatrix}
\Adj
\begin{pmatrix}
\Um \\
& \Um \\
&& \ddots \\
&&& \Um
\end{pmatrix}.
\end{eqnarray*}

Since similar matrices have the same eigenvalues and characteristic polynomials~\cite{Lancaster:85,Gantmacher:59}, it suffices to construct the characteristic polynomial of the block tridiagonal matrix with diagonal blocks $\Eig_{\Bm_n}$, $\Eig_{\Cm_n}$, and $\Eig_{\Dm_n}$. The recurrence~\eqref{eq:recurrence} for the corresponding matrix polynomials $\Qm_n(x)$ becomes
\begin{equation}
\label{eq:recurrence_diagonal}
\Qm_{n+1}(x) = \Eig_{\Dm_n}^{-1} \left( x\cdot \Qm_n(x) - \Eig_{\Bm_n} \Qm_n(x) - \Eig_{\Cm_{n-1}} \Qm_{n-1}(x) \right).
\end{equation}
Since $\Qm_{-1}(x) = \ZeroMatrix_K$ and $\Qm_0(x)=\Id_K$, each polynomial $\Qm_n(x)$ generated by the recurrence~\eqref{eq:recurrence_diagonal} is a diagonal matrix. The elements on its diagonal are polynomials of degree $n$. Hence, the determinant of $\Qm_L(x)$ is a product of $K$ polynomials of degree $L$.

The representation of the characteristic polynomial by a product of $K$ polynomials of degree $L$ yields substantial reduction in the cost of eigenvalue calculation. The factorization of the polynomial $p_\Adj(x)$ of degree $N$, in general, requires $O(N^3)$ operations. In comparison, the factorization of $K$ polynomials of degree $L$ requires $O(KL^3)=O(N^3/K^2)$ operations. Hence, the eigenvalue calculation is accelerated by a factor of $K^2$.

\subsection{Eigenvector calculation}

The closed-form expression~\eqref{eq:eigenvector_matrix} for the eigenvector matrix yields a substantial reduction of the computation cost for any block tridiagonal matrix~\eqref{eq:block_tridiagonal_form}. In general, the direct computation of eigenvectors requires solving $N$ equations with $N$ unknowns, with a total cost of $O(N^3)$ operations.
Instead, we can calculate the bases of the null-spaces of $M$ matrices $\Pm_{L}(\lambda_m)$, which requires only $O(K^2M)$ operations, and compute $N$ products of $K\times K$ matrices with vectors of length $K$, which requires $O(K^2N)=O(N^3/L^2)$ operations. The total operations required are $O(K^2M)+O(N^3/L^2) = O(N^3/L^2)$. Hence, the eigenvector calculation is accelerated by a factor of $L^2$.

\section{Spectral analysis of ``spider'' graphs}

In this chapter, we apply the theory presented in this paper to the spectral analysis of graphs, i.e., the computation of eigenvalues and eigenvectors of adjacency matrices of graphs. Spectral graph theory finds many important applications, including among others machine learning and data mining~\cite{Chapelle:06,Luxburg:07}, ranking algorithms~\cite{Brin:98}, and image processing~\cite{Shi:00}.

\subsection{``Spider'' graphs}

We consider the spectral analysis of ``spider'' graphs~\cite{Dette:06, Grunbaum:09}, since their adjacency matrices have the required block tridiagonal structure~\eqref{eq:block_tridiagonal_form}.
These graphs can be seen as generalizations of star graphs. A ``spider'' graph consists of $K$ legs with $L$ nodes on each leg, such that nodes on each leg are connected sequentially, and a few nodes on different legs can be connected to each other. These graphs arise in different problems and settings. For example, sampled pulses in magnetic resonance imaging and X-ray tomography form a ``spider'' graph in $K$-space~\cite{Lauterbur:73}. These graphs also can represent the layout of sensor networks or router interconnection topology.

\begin{figure}
  \begin{center}
    \subfigure[]{\label{fig:spider1}\includegraphics[scale=0.5]{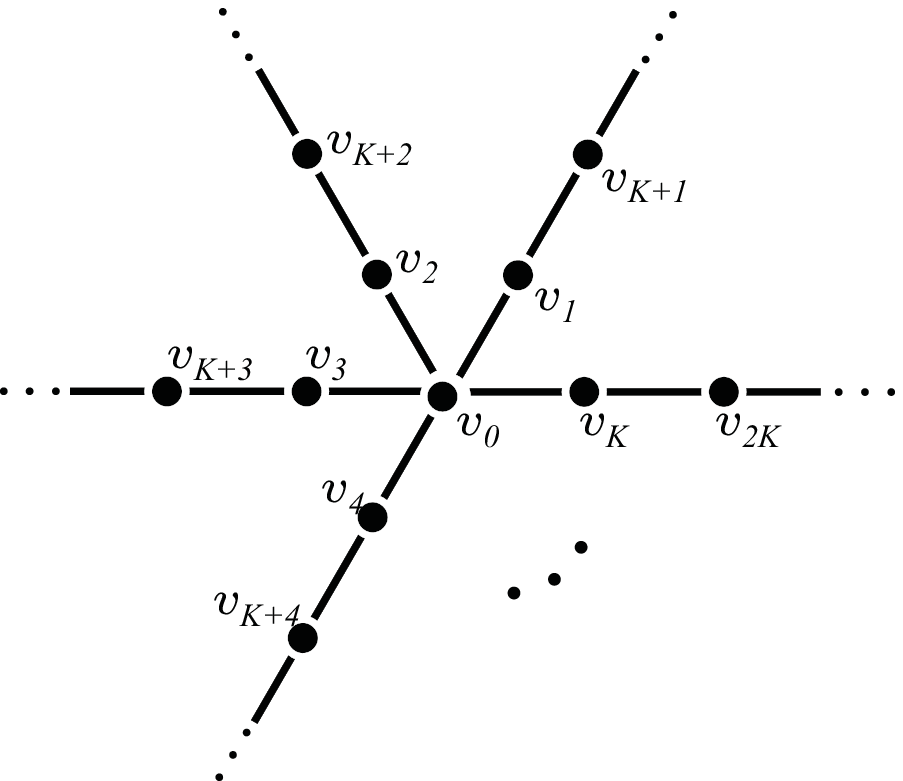}}
    \hspace{5mm}
    \subfigure[]{\label{fig:spider2}\includegraphics[scale=0.5]{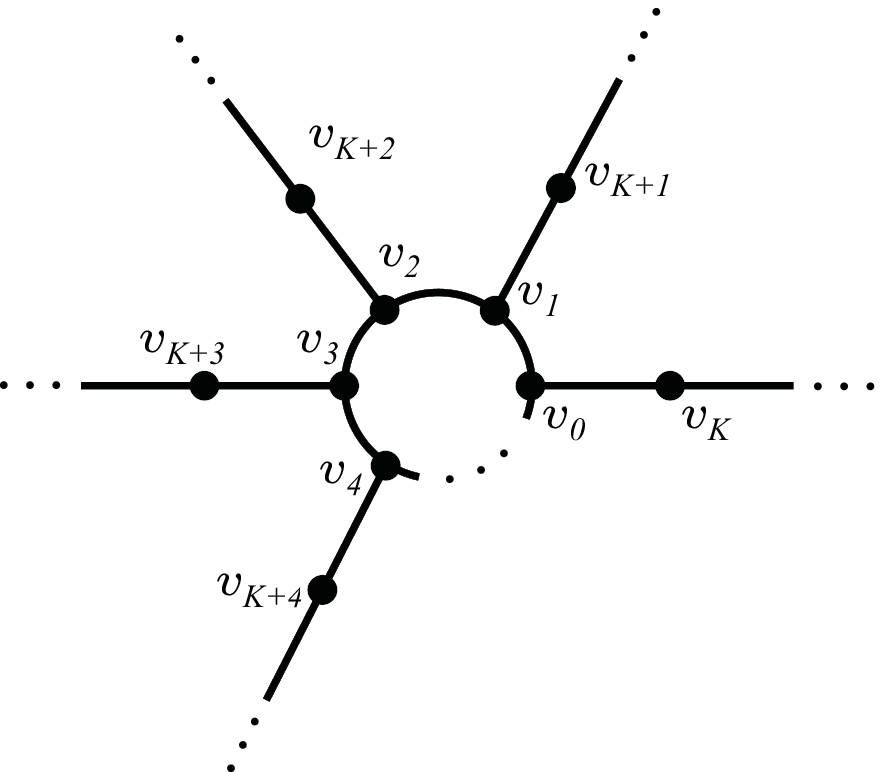}}
  \end{center}
  \vspace{-2mm}
\caption{\label{fig:spider_graphs} Examples of ``spider'' graphs with $K$ rays.}
\vspace{-5mm}
\end{figure}

Examples of ``spider'' graphs are shown in Fig.~\ref{fig:spider_graphs}. For simplicity of discussion, assume that the graphs in Fig.~\ref{fig:spider_graphs} are undirected and unweighted, i.e., all edges are undirected and have the same weight $1$. If we label the $\ell$th node on the $k$th leg of a ``spider'' graph as $v_{k+\ell K}$, the graph adjacency matrix becomes a $N\times N$ block tridiagonal matrix of the form
\begin{equation}
\label{eq:star_ajdacency_matrix}
\Adj = \begin{pmatrix}
\Bm & \Id_K \\
\Id_K & \ZeroMatrix_K & \ddots \\
& \ddots & \ddots & \Id_K \\
&& \Id_K & \ZeroMatrix_K
\end{pmatrix},
\end{equation}
where $N=KL$. Matrix $\Bm$ is a $K\times K$ matrix that captures the connection pattern between different legs in the middle of the graph. For the graph in Figs.~\ref{fig:spider1}, this matrix is
\begin{equation}
\label{eq:spider_B}
\Bm = \begin{pmatrix}
0 & 1 & \dots & 1 \\
1  \\
\vdots & & \ZeroMatrix_{K-1} \\
1
\end{pmatrix},
\end{equation}
and for the graph in Fig.~\ref{fig:spider2} this matrix is
$$
\Bm = \begin{pmatrix}
0 & 1 &&1 \\
1 & 0 & \ddots \\
& \ddots & \ddots& 1 \\
1&&1 & 0
\end{pmatrix}.
$$

The generating recurrence~\eqref{eq:recurrence} for the matrix polynomials corresponding to the block tridiagonal matrix~\eqref{eq:star_ajdacency_matrix} is
\begin{equation}
\label{eq:recurrence_spider}
\Pm_n(x) = x\Pm_{n-1}(x) - \Pm_{n-2}(x),
\end{equation}
with $\Pm_0(x) = \Id_K$ and $\Pm_1(x) = x\Id_K-\Bm$. It follows from~\eqref{eq:recurrence_spider} that the generated matrix polynomials have the form
\begin{equation}
\label{eq:P_spider}
\Pm_n(x) = U_n(x/2)\Id_K + U_{n-1}(x/2)\Bm,
\end{equation}
where $U_n(x)$ are Chebyshev polynomials of the second kind. Recall that Chebyshev polynomials of the second kind $U_n(x)$ satisfy the
three-term recurrence $U_{n}(x) = 2xU_{n-1}(x) - U_{n-2}(x)$ with initial conditions $U_0(x)=1$ and $U_1(x)=2x$~\cite{Mason:02}.
The $n$th Chebyshev polynomial $U_n(x)$ has exactly $n$ distinct, simple roots
\begin{equation}
\label{eq:cheb_root}
x_k=\frac{\cos(k+1)\pi}{2n+1}
\end{equation}
for $0\leq k < n$.

Next, we determine the characteristic polynomial of the adjacency matrix~\eqref{eq:star_ajdacency_matrix} and its eigenvector matrix, as well as identify a fast computation algorithm for its eigenbasis expansion.

\subsection{Eigenvalues of a ``spider'' graph}

As a running example for the rest of this chapter, we consider the graph in Fig.~\ref{fig:spider1} with the corresponding block $\Bm$ given by~\eqref{eq:spider_B}.
Since the adjacency matrix~\eqref{eq:star_ajdacency_matrix} is symmetric, it is diagonalizable. Hence, according to Theorem~\ref{thm:char_is_det_diagonalizable},
its eigenvalues multiplicities are the same as the roots of the determinant of
$$
\Pm_L(x) =
\begin{pmatrix}
U_{L}(x) & U_{L-1}(x) & \dots & U_{L-1}(x) \\
U_{L-1}(x) & U_{L}(x)  \\
\vdots && \ddots \\
U_{L-1}(x) &&& U_{L}(x)
\end{pmatrix},
$$
as given by~\eqref{eq:P_spider}. It is straightforward to demonstrate by induction that
\begin{eqnarray}
\label{eq:det_spider}
\nonumber
\det\Pm_L(x) &=&
U_L^{K-2}(x/2)\left( U_L^2(x/2) - (K-1)U_{L-1}^2(x/2) \right) \\
&=& U_L^{K-2}(x/2) \\
\nonumber
&& \times \left( U_L(x/2) + \sqrt{K-1}U_{L-1}(x/2) \right) \\
\nonumber
&& \times \left( U_L(x/2) - \sqrt{K-1}U_{L-1}(x/2) \right) .
\end{eqnarray}

The roots $\alpha_0,\ldots,\alpha_{L-1}$ of the polynomial $U_L(x/2)$ are given by~\eqref{eq:cheb_root}.
Hence, $\Adj$ has $L$ eigenvalues $\alpha_k=2\cos(k+1)\pi / (2L+1)$, $0\leq k < L$, each with multiplicity $K-2$.
To determine the remaining eigenvalues of $\Adj$, we need to find the roots
$\beta_0,\ldots,\beta_{L-1}$ of the polynomial $U_L(x/2) + \sqrt{K-1}U_{L-1}(x/2)$; and
$\gamma_0,\ldots,\gamma_{L-1}$ denote the roots of the polynomial $U_L(x/2) - \sqrt{K-1}U_{L-1}(x/2)$.
This can be done by factoring the corresponding polynomials directly. Alternatively, we can use the property that $\beta_k$ and $\gamma_k$
are the eigenvalues of matrices~\cite{Askey:87,Pueschel:05e,Sandryhaila:12}
$$
\frac{1}{2}
\begin{pmatrix}
& 1 \\
1& & \ddots \\
& \ddots && 1 \\
&& 1 & -\sqrt{K-1}
\end{pmatrix}
\text{     and     }
\frac{1}{2}
\begin{pmatrix}
& 1 \\
1& & \ddots \\
& \ddots && 1 \\
&& 1 & \sqrt{K-1}
\end{pmatrix},
$$
respectively. Furthermore, all $\beta_k$ and $\gamma_k$ are simple, distinct roots.
Hence, $\Adj$ has $2L$ eigenvalues $\beta_k$ and $\gamma_k$, $0\leq k < L$, each with multiplicity $1$.

\subsection{Eigenvectors of a ``spider'' graph}

To construct the eigenvector matrix $\Vm$ using Theorem~\ref{thm:eigenvector_matrix}, we determine bases of the null-spaces of the scalar matrices $\Pm_L(\alpha_k)$, $\Pm_L(\beta_k)$, and $\Pm_L(\gamma_k)$.

Since $U_{L-1}(\alpha_k/2)\neq 0$, a basis of the null-space of $\Pm_L(\alpha_k)=U_{L-1}(\alpha_k/2)\Bm$ is the same as the basis of the null-space of $\Bm$. It is given by columns of the $K\times (K-2)$ matrix
\begin{equation}
\label{eq:H}
\Hm = \begin{pmatrix}
0 & 0 & \dots & 0 \\
1  \\
-1 & 1   \\
 & -1 & \ddots   \\
 &  & \ddots & 1   \\
 &&& -1
\end{pmatrix}.
\end{equation}

For $\lambda\in\{\beta_0,\ldots,\beta_{L-1},\gamma_0,\ldots,\gamma_{L-1}\}$, a basis of the null-space of the scalar matrix $\Pm_L(\lambda) = U_L(\lambda/2)\Id_K + U_{L-1}(\lambda/2)\Bm$ is given by the vector
\begin{equation}
\label{eq:h_lambda}
\coord{h}_\lambda = \begin{pmatrix} U_L(\lambda/2) & U_{L-1}(\lambda/2) & \ldots & U_{L-1}(\lambda/2) \end{pmatrix}^T.
\end{equation}

Hence, the eigenvector matrix for the adjacency matrix~\eqref{eq:star_ajdacency_matrix} can be written as a block matrix
\begin{equation}
\label{eq:spider_eigenvector_matrix}
\Vm = \left( \Vm_\alpha\, \Vm_\beta\, \Vm_\gamma \right),
\end{equation}
where
\begin{eqnarray*}
\Vm_\alpha &=& \begin{pmatrix}
\Pm_0(\alpha_0)\Hm & \dots & \Pm_0(\alpha_{L-1})\Hm  \\
\vdots && \vdots  \\
\Pm_{L-1}(\alpha_0)\Hm & \dots & \Pm_{L-1}(\alpha_{L-1})\Hm
\end{pmatrix}, \\
 \\
\Vm_\beta &=& \begin{pmatrix}
\Pm_0(\beta_0)\coord{h}_{\beta_0} & \dots & \Pm_0(\beta_{L-1})\coord{h}_{\beta_{L-1}}\\
\vdots && \vdots \\
\Pm_{L-1}(\beta_0)\coord{h}_{\beta_0} & \dots & \Pm_{L-1}(\beta_{L-1})\coord{h}_{\beta_{L-1}}
\end{pmatrix},\\
 \\
\Vm_\gamma &=& \begin{pmatrix}
\Pm_0(\gamma_0)\coord{h}_{\gamma_0} & \dots & \Pm_0(\gamma_{L-1})\coord{h}_{\gamma_{L-1}} \\
\vdots && \vdots \\
\Pm_{R-1}(\gamma_0)\coord{h}_{\gamma_0} & \dots & \Pm_{L-1}(\gamma_{L-1})\coord{h}_{\gamma_{L-1}}
\end{pmatrix}.
\end{eqnarray*}

\subsection{Fast eigenvector expansion algorithm}

We can use the closed-form expression~\eqref{eq:spider_eigenvector_matrix} for the eigenvector matrix of the ``spider'' graph to construct a fast algorithm for the eigenvector expansion. The expansion of a vector $\coord{y}\in\C^N$ is given by the matrix-vector product
\begin{equation}
\label{eq:expansion}
\coord{\hat{y}} = \Vm^{-1} \coord{y}.
\end{equation}
In the case of symmetric $\Adj$, the expansion~\eqref{eq:expansion} simplifies to
\begin{equation}
\label{eq:expansion_symm}
\coord{\hat{y}} = \Vm^T \coord{y}.
\end{equation}
We construct a fast computation algorithm for the eigenvector expansion~\eqref{eq:expansion_symm} by decomposing the matrix $\Vm$ into smaller matrices that can be computed very fast, which is an effective approach for the construction of fast algorithms for important linear operators~\cite{Pueschel:03a,Pueschel:08c,Sandryhaila:11a}.

Since $\alpha_k$ are roots of the polynomial $U_L(x/2)$, we can use the relation~\eqref{eq:P_spider} between the matrix polynomials and Chebyshev polynomials to rewrite $\Vm_\alpha$
in terms of the discrete sine transform as~\cite{Pueschel:05e,Pueschel:08b}
\begin{equation}
\label{eq:Valpha}
\Vm_\alpha = \DSTI_L\otimes \Hm + \left(\Cm \DSTI_L\right) \otimes \left(\Bm\Hm\right),
\end{equation}
where
\begin{equation}
\label{eq:C}
\Cm = \begin{pmatrix}
0 & 1 \\
& 0 & \ddots \\
&& \ddots & 1 \\
&&& 0
\end{pmatrix}
\end{equation}
is a $L\times L$ matrix; $\DSTI_R$ is the unscaled discrete sine transform of the first type of size $L$~\cite{Pueschel:08b}; $H$ is given by~\eqref{eq:H}; and $\otimes$ denotes the tensor (Kronecker) product of matrices~\cite{Bernstein:09}.

Similarly, we use~\eqref{eq:P_spider} to rewrite both matrices $\Vm_\beta$ and $\Vm_\gamma$ as
\begin{equation}
\label{eq:Vlambda}
\Vm_\lambda = \Big( \Tm(\lambda)\otimes \Id_K + \Cm \Tm(\lambda)\otimes \Bm \Big) \widehat{\Hm}_\lambda,
\end{equation}
where the parameter $\lambda$ stands for $\beta$ or $\gamma$. The matrix $\Tm(\lambda)$ is a $L\times L$ matrix with $(n,k)$th element given by
\begin{equation}
\label{eq:T}
\Tm(\lambda)_{n,k} = U_n(\lambda_k/2)
\end{equation}
for $0\leq n,k<L$. The matrix $\Cm$ is given in~\eqref{eq:C}. The matrix
$$
\widehat{\Hm}_\lambda = \begin{pmatrix}
\coord{h}_{\lambda_0}\\
&\ddots\\
&&\coord{h}_{\lambda_{L-1}}
\end{pmatrix}
$$
is a $N\times L$ block diagonal matrix obtained by using vectors $\coord{h}_{\lambda_0},\ldots,\coord{h}_{\lambda_{L-1}}$, given by~\eqref{eq:h_lambda},
as elements of a $L\times L$ diagonal matrix.

In general, the calculation of the matrix-vector product~\eqref{eq:expansion} requires $O(N^2)$ operations. However, the decompositions~\eqref{eq:Valpha} and~\eqref{eq:Vlambda} yield a fast algorithm for the calculation of matrix-vector products of $\Vm_\alpha^T$, $\Vm_\beta^T$, and $\Vm_\gamma^T$ with the vector $\coord{y}$, and hence, for the product~\eqref{eq:expansion}, as explained next.

The calculation of $\DSTI_L$ in~\eqref{eq:Valpha} requires $O(L\log L)$ operations~\cite{Pueschel:03a,Pueschel:08c}. The calculations of $\Bm$ and $\Hm$ require $O(K)$ operations each,
and $\Cm$ does not require any operations. Hence, the calculation of a matrix-vector product with $\Vm_\alpha$ according to~\eqref{eq:Valpha} requires $O(L\log L)K+O(K)L = O(KL\log L) = O(N\log L)$ operations in total.

The matrix $\Tm(\gamma)$ in~\eqref{eq:T} is called a \emph{1-D nearest-neighbor transform}~\cite{Sandryhaila:12}. Its calculation requires $O(L\log^2 L)$ operations~\cite{Driscoll:97}.
The calculation of $\Bm$ requires $O(K)$ operations, the calculation of $\widehat{\Hm}_\lambda$ requires $O(N)$ operations, and $\Cm$ does not require any operations.
Hence, the calculations of matrix-vector products with $\Vm_\beta$ and $\Vm_\gamma$ using~\eqref{eq:Vlambda} each require $O(L\log^2 L)K + O(L\log^2 L)K + O(K)L + O(N) = O(N\log^2 L)$ operations.

Hence,~\eqref{eq:expansion} requires $O(N\log L) + O(N\log^2 L) + O(N\log^2 L) = O(N\log^2 L)$ operations altogether, instead of $O(N^2)$.
Thus, we have reduced the computational cost by the factor $N/\log^2 L$ and obtained a fast algorithm for the eigenvector expansion for the block tridiagonal matrix~\eqref{eq:star_ajdacency_matrix}.
The plot in Fig.~\ref{fig:acceleration} illustrates the improvement achieved by this algorithm. It shows the computational savings achieved on a ``spider'' graph with $K=8$ rays as the number of nodes $L$ on each ray increases.

Also, notice that in the degenerate case when $K=2$, the graph in Fig.~\ref{fig:spider1} reduces to an undirected line graph. The corresponding graph Fourier transform then is $\DSTI_N$ and requires only $O(N\log N)=O(N\log L)$ operations, instead of $O(N\log^2 L)$ operations~\cite{Pueschel:08c}.

\begin{figure}
  \vspace{-15mm}
  \begin{center}
    \includegraphics[scale=0.4]{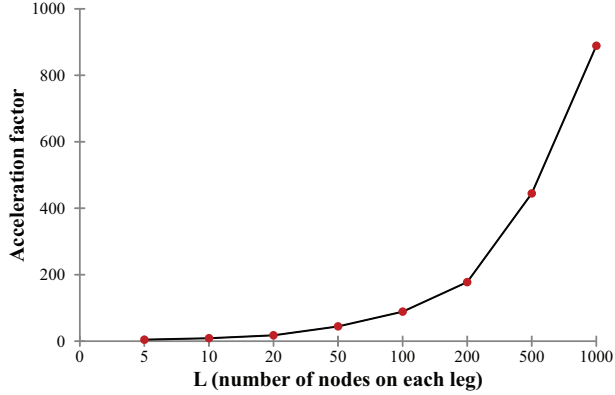}
  \end{center}
  \vspace{-2mm}
\caption{\label{fig:acceleration} Computation speed-up achieved by the fast algorithm for the eigenvector expansion for the adjacency matrix~\eqref{eq:star_ajdacency_matrix} of a ``spider'' graph with $K=8$ legs and $L$ of nodes on each leg. }
\end{figure}

The constructed fast algorithm uses a \emph{divide-and-conquer} approach similar to the Fast Fourier Transform algorithms~\cite{Duhamel:90} and their variations for discrete cosine and sine transforms~\cite{Pueschel:03a,Pueschel:08c}, real discrete Fourier transform~\cite{Voronenko:09}, Walsh-Hadamard transform~\cite{Johnson:00}, and other linear transforms~\cite{Sandryhaila:11a}. Divide-and-conquer algorithms compute their corresponding matrix-vector products by factoring the matrix into a product of sparse matrices that can then be factored further into products of yet sparser matrices. This approach reduces the computational cost from $O(N^2)$ operations to $O(N\log N)$ or $O(N\log^2 N)$ and makes it feasible to compute the transforms of very large sizes. Moreover, the structure of divide-and-conquer algorithms makes them particularly suitable for implementations on multi-core platforms~\cite{Franchetti:09} yielding further speed-ups.


\section{Conclusion}

This paper considers the problem of exact eigendecomposition of block tridiagonal matrices. We study the relation between this class of matrices and appropriately generated matrix polynomials. We connect eigenvalues of block tridiagonal matrices with the zeros of the matrix polynomials and relate matrix eigenvectors to the null-spaces of the matrix polynomials evaluated at the eigenvalues, which are scalar matrices of much smaller dimensions.

Our framework reduces the cost of the eigendecomposition of block tridiagonal matrices, since it replaces direct calculations of large matrices with equivalent problems of polynomial factorization and determination of null-spaces for significantly smaller matrices. Furthermore, it yields a closed-form expression for eigenvector matrices that can lead to the discovery of fast algorithms for the eigenvector expansion, as we illustrated with the example of ``spider'' graphs.


\bibliographystyle{siam}
\begin{small}
\bibliography{../../references}
\end{small}
\end{document}